\documentclass[11pt, a4paper]{amsart}
\usepackage{amsmath, amsthm, amssymb, amsfonts, enumerate}
\usepackage[colorlinks=true,linkcolor=blue,urlcolor=blue]{hyperref}
\usepackage{verbatim}
\usepackage[dvips]{color}
\usepackage[active]{srcltx}
\usepackage{mathrsfs}
\usepackage{latexsym}
\usepackage{graphicx}
\usepackage{float}
\usepackage{mathrsfs}
\usepackage[active]{srcltx}
\usepackage{xcolor}
\usepackage{geometry}\usepackage{epstopdf}
\usepackage{commath}
\usepackage{caption}
\usepackage{accents}
\usepackage{soul}

\geometry{hmargin=2.5cm, vmargin=2.5cm}

\allowdisplaybreaks

\newtheorem{theorem}{Theorem}[section]

\newtheorem{proposition}{Proposition}[section]
\newtheorem{lemma}{Lemma}[section]
\newtheorem{corollary}{Corollary}[section]
\theoremstyle{definition}

\newtheorem{remark}{Remark}[section]

\floatstyle{plain} \restylefloat{figure} \restylefloat{table}

\def\A{\mathcal A}
\def\B{\mathcal B}
\def\C{\mathcal C}

\def\E{\mathbb E}
\def\F{\mathcal F}

\def\L{\mathcal L}

\def\P{\mathbb P}

\def\R{\mathbb R}
\def\S{\mathcal S}

\def\ud{\mathrm d}
\def\eps{\varepsilon}
\def\ep{\varepsilon}

\newcommand{\kom}[1]{}
\renewcommand{\kom}[1]{{\bf [#1]}}
\definecolor{blau}{rgb}{0.1,0.0,0.9}

\newcounter{komcounter}
\numberwithin{komcounter}{section}

\title[Maximality principle in the dividend problem]{The maximality principle in singular control with absorption and its applications to the dividend problem}
\author[T.\ De Angelis, E.\ Ekstr\"om, M.\ Olofsson]{Tiziano De Angelis, Erik Ekstr\"om and Marcus Olofsson}
\keywords{Singular control with absorption; the maximality principle; the dividend problem; optimal stopping; free boundary problems}
\thanks{{\em Mathematics Subject Classification 2020}: 91G50, 93E20, 60G40, 60J60}
\address{T.\ De Angelis: School of Management and Economics, Dept.\ ESOMAS, University of Torino, Corso Unione Sovietica, 218 Bis, 10134, Torino, Italy; Collegio Carlo Alberto, Piazza Arbarello 8, 10122, Torino, Italy.}
\email{\href{mailto:tiziano.deangelis@unito.it}{tiziano.deangelis@unito.it}}
\address{E.\ Ekstr\"om: Department of Mathematics, Uppsala University, Box 256, 75105 Uppsala, Sweden.}
\email{\href{mailto: ekstrom@math.uu.se}{ekstrom@math.uu.se}}
\address{M.\ Olofsson: Department of Mathematics and Mathematical Statistics, Ume\r a University, 90187 Ume\r a, Sweden}
\email{\href{marcus.olofsson@umu.se }{marcus.olofsson@umu.se }}

\date{\today}

\begin{document}

\maketitle

\begin{abstract}
Motivated by a new formulation of the classical dividend problem, {we show that Peskir's}
{\em maximality principle} {can be transferred to} singular stochastic control problems with 2-dimensional degenerate dynamics and absorption along the diagonal of the state space.  
We construct an optimal control as a Skorokhod reflection along a moving barrier, where the barrier can be computed analytically as the smallest solution to a certain 
non-linear
ordinary differential equation. Contrarily to the classical 1-dimensional formulation of the dividend problem, our framework produces a non-trivial solution when the firm's 
(pre-dividend) equity 
capital evolves as a geometric Brownian motion. Such solution is also qualitatively different from the one traditionally obtained for the arithmetic Brownian motion.
\end{abstract}

\section{Introduction}
The modern formulation of De Finetti's classical {\em dividend problem} \cite{DeF} is a very popular example of a singular stochastic control (SSC) problem with absorption of the state dynamics. The absorption feature captures the default of a firm whose capital evolves randomly in time and that pays dividends to its share-holders according to a singular control strategy that must be determined via a stochastic optimisation. Another application of SSC with absorption can be found in the literature on optimal resource extraction under stochastic fluctuations. An early contribution in that area is a problem of optimal harvesting of a population formulated and solved by Alvarez and Shepp \cite{AS}, where the absorption describes the extinction of the population being harvested. The basic idea in this class of problems is that exerting control may endogenously trigger absorption of the state-process, which is generally undesirable. Therefore, when constructing optimal strategies one needs to find a trade-off between exerting control (i.e., paying dividends or harvesting) and keeping a sufficiently high reserve (cash or resources) to withstand future fluctuations in the dynamics.

Mathematically, SSC problems with absorption are harder to study than their counterpart without absorption. This is due to the fact that the absorption feature introduces an inhomogeneity in the state space that translates into additional boundary conditions in the Hamilton-Jacobi-Bellman equation associated to the stochastic control problem. When the underlying dynamics is 1-dimensional, an approach based on an educated guess for the optimal strategy and a verification theorem (so-called {\em guess-and-verify}) is generally adopted to obtain solutions in closed-form. 
In higher dimensions, guessing-and-verifying is not always feasible.
However, some two-dimensional stochastic control problems with {\em degenerate dynamics} are known to be tractable and produce solutions in closed form (yet not explicit, in general). Notably, in this class of problems we find Markovian stopping problems where the payoff upon stopping depends on the supremum process (cf. \cite{DSS}).
Such considerations motivate our study of SSC with two-dimensional degenerate dynamics and absorption. In this context we develop 
a {solution} method
that transfers the so-called {\em maximality principle} in optimal stopping (Peskir \cite{P98}) to SSC.

For the ease of presentation we focus on 
a variant of the classical
dividend problem; extensions beyond this model are possible and they are highlighted in Remark \ref{rem:extensions}. A control (or \textit{dividend strategy}) is a non-decreasing stochastic process $D$ that stands for the cumulative amount of dividends paid by a firm to its share-holders over time. Denoting by $\gamma$ the firm's default time, the dividend problem can be stated informally as
$$
\mbox{
Find $D$ that maximizes $\E  \left[\int_{0}^{\gamma} e^{-rt} \ud D_t\right]$.
}
$$

A common approach in the literature is to use a diffusion approximation for the firm's net capital. The capital may fluctuate because of gains and losses incurred by the firm over time and the traditional example is that of an insurance company that collects premia at a certain rate and pays claims as and when they occur. In fact, a benchmark in the literature is to model the (post-dividend) equity
capital as a Brownian motion with drift subject to a downward push, i.e., 
$$
Y_t^D= y+\mu t + \sigma W_t - D_t.
$$
In this setting the default time $\gamma$ is the first time $Y^D$ goes below $0$. It has been shown (see Asmussen and Taksar \cite{AT}, Jeanblanc and Shiryaev \cite{JPS}, Radner and Shepp \cite{RS}) that the optimal strategy is of threshold type, i.e., it is optimal to pay the minimal amount of dividends required to ensure that $Y^D$ stays below a constant threshold $b$, which can be determined explicitly.
The constant coefficient case admits two natural interpretations: 
\begin{itemize}
\item[(i)] $Y^D$ represents the  {\em post}-dividend 
{equity}
capital of a company, i.e., the holdings after dividend payments have been deducted according to a strategy $D$ (as described above);
\end{itemize}
or
\begin{itemize}
\item[(ii)] an arithmetic Brownian motion $Y_t=y+\mu t+\sigma W_t$ models the firm's  {\em pre}-dividend equity
capital, i.e., the equity capital that the firm would have if no dividends were ever paid out, while $D$ is a given 
dividend strategy. In this case the default time links the two processes via the relationship $\gamma=\inf\{t\geq 0:Y_t\leq D_t\}$.
\end{itemize}
For constant coefficients, the two formulations are equivalent (set $Y^D=Y-D$). Instead, 
when generalising to an underlying process that follows a 1-dimensional diffusion with state-dependent coefficients $\mu(\cdot)$ and $\sigma(\cdot)$, the two settings are truly different: in particular, either the coefficients depend on the {\em post}-dividend equity
capital, or on the {\em pre}-dividend equity
capital (or, in a more refined model, on both).
In the first case, the process $Y^D$ and absorption time $\gamma$ are defined as
\begin{equation} \label{eq:formulation1}
Y^D_t =  y + \int_0 ^t \mu(Y^D_s)  \ud s + \int_ 0^t \sigma(Y^D_s) \ud W_s - D_t
\end{equation}
and
\[
\gamma=\gamma^D =\inf\{t \geq 0 : Y^D_t \leq 0\},
\]
respectively. In the second case instead the 
pre-dividend equity capital
evolves as an uncontrolled process
\begin{equation} \label{eq:formulation2}
Y_t =  y + \int_0 ^t \mu(Y_s)  \ud s + \int_ 0^t \sigma(Y_s) \ud W_s
\end{equation}
and
\[
\gamma =\gamma^D=\inf\{t \geq 0 : Y_t \leq D_t\}.
\]
The first formulation \eqref{eq:formulation1} is 
well-suited for problems of resource extraction, where the rate of reproduction depends on the current population size. The problem is
one-dimensional
in the sense that a sufficient statistics consist of only the current level of $Y^D$. As a consequence, the value function of the problem is characterised by a free-boundary problem in terms of an ordinary differential equation (ODE) (e.g., Shreve, Lehoczky, Gaver \cite{SLG}).  
In contrast, the second formulation
\eqref{eq:formulation2} has a two-dimensional sufficient statistic $(D,Y)$ and the associated free-boundary problem is therefore more involved. 

In the current article, we study the two-dimensional formulation \eqref{eq:formulation2}. From a financial perspective that model assumes that the law of the firm's (pre-dividend) equity
capital $Y_{t+\ud t}$ at time $t+\ud t$ depends on its own value 
$Y_t$ at time $t$ via the coefficients in the stochastic differential equation (SDE) in \eqref{eq:formulation2}, but it does not depend on the amount of dividends paid to share-holders. However, the actual cash reserve (the post-dividend equity 
capital) of the firm at any time $t$ is given by the difference $Y_t-D_t$ and, over time, the firm cannot pay out in dividends more than its total (pre-dividend) equity
capital.

We find in this paper that the conditions for non-trivial solutions in the two cases \eqref{eq:formulation1} and \eqref{eq:formulation2} differ considerably. Notably, it is well-known that the standard financial model using a geometric Brownian motion (gBm) is degenerate in the first formulation: if the drift exceeds the discount rate (in the notation of Section~\ref{sec8} below, $\alpha>r$), then the value is infinite; if instead the drift is smaller than the discount rate ($\alpha\le r$), then it is optimal to distribute an initial lump sum payment of size $y$ that leads to immediate default of the firm (absorption). In contrast, the second formulation gives rise to a non-degenerate problem,
the details of which are provided in Section~\ref{sec8} below.

Our main contributions are threefold:

\begin{itemize}
\item[(i)]
We study a new formulation of the dividend problem. We establish conditions under which its solution is given by a dividend strategy of (stochastic) moving-barrier type and we obtain the barrier level as minimal solution of an associated non-linear ODE. Our formulation covers standard financial models building upon gBm that produce optimal strategies that are qualitatively and quantitatively different from the classical models with arithmetic Brownian motion (see Remark \ref{rem:quality}).
\item[(ii)] We show that Peskir's {\em maximality principle} \cite{P98} for optimal stopping problems involving the supremum process finds applications in the context of our SSC problems with absorption.
Although our results are presented for the dividend problem, the methods and the maximality principle can be adapted to more general situations at the cost of dealing with more involved ODEs for the optimal barrier. That, however, leads to potentially difficult questions about existence of a minimal solution of such ODEs.
\item[(iii)]
We are able to transfer the maximality principle from optimal stopping to SSC
by extending the well-known connection between singular control and optimal stopping (see 
Bather and Chernoff \cite{BC}, Baldursson and Karatzas \cite{BKar}, Boetius and Kohlmann \cite{BK}, Karatzas and Shreve \cite{KS}) to the current case of 
{\em{two-dimensional}}
singular control 
{\em with absorption}. 
The derivative of the value function in the dividend problem with respect to the state variable associated to the process $D$ is the value function of an optimal stopping problem for a two-dimensional degenerate diffusion {\em with oblique reflection}
at the diagonal of the first quadrant in the Cartesian plane. The gain function depends on such dynamics via a state-dependent exponential factor which increases upon each reflection at a `rate' depending (informally) on the `local-time' of the process at the diagonal. We emphasise that the original connection between singular control and optimal stopping (see \cite{BC}, \cite{BKar}, \cite{BK}, \cite{KS}) has a {\em different structure} compared to ours. In those papers, the controlled dynamics does not undergo absorption and, as a result, the optimal stopping problem does not involve reflecting processes and local times. The mathematical arguments that provide the connection in \cite{BC}, \cite{BKar}, \cite{BK}, \cite{KS} do not apply to our setting as they rest on convexity/concavity of the expected payoff of the SSC problem with respect to the initial value of the controlled state variable. {That condition} breaks down in our framework because of the additional absorption (default) time $\gamma$. When convexity/concavity are not in place, as in our setting, a connection between SSC and optimal stopping cannot be taken for granted. Indeed, it was shown in De Angelis et al.\ \cite[Sec.\ 3]{DeAFM15} that without convexity/concavity the classical connection in the spirit of Bather and Chernoff \cite{BC} fails.
\end{itemize}

The paper is organised as follows.
In Section \ref{sec:setting} we present a detailed problem formulation, and we state our main result (Theorem \ref{thm:mainthm}). The theorem derives  an optimal dividend strategy transferring the maximality principle from optimal stopping problems to singular control problems with absorption.
Section \ref{sec6} presents the key heuristic ideas that led us to the derivation of 
the solution of the singular control problem. 
Sections \ref{sec3}--\ref{sec4} are devoted to the proof of Theorem \ref{thm:mainthm}. In Section \ref{sec5} we establish a connection between our SSC problem with absorption and an optimal stopping problem, highlighting the link between the 
maximality principle and SSC. In Section \ref{sec8} we apply our main result to solve our version of the dividend problem for 
gBm.

\section{Setting and main results}\label{sec:setting}

In this section we first formulate the stochastic control problem and define its value function. Then we introduce a class of solutions of a certain ODE and we associate with it a collection of candidate value functions for the control problem. Finally, we construct suitable admissible controls (via Skorokhod reflection) and we use them to state our main result (Theorem \ref{thm:mainthm}).

\subsection{Problem formulation}
Throughout the paper we consider a filtered probability space $(\Omega, \F, (\F_t)_{t\ge 0}, \P)$ equipped with a Brownian motion $W:=(W_t)_{t\ge 0}$ adapted to $(\F_t)_{t\ge 0}$. The filtration is augmented with $\P$-null sets and it is right-continuous. We denote by $Y$ the unique strong solution on $[0,\infty)$ to
\begin{equation}
\label{Y}
Y_t =y+\int_0^t \mu (Y_s) \ud s + \int_0^t \sigma(Y_s) \ud W_s\,,
\end{equation}
where $y\geq 0$ and $\mu:[0,\infty)\to [0,\infty)$, $\sigma : [0,\infty) \to [0,\infty)$ are locally Lipschitz continuous functions with at most linear growth on $(0,\infty)$, with $\sigma(y)>0$ for $y>0$. 
The process $Y$ is {\em regular} in the sense that it visits each point of $(0,\infty)$ in finite time with positive probability (provided $y>0$).  We further assume that $0$ is an absorbing boundary point in case it can be reached in finite time, 
and that $\infty$ is a {\em natural} boundary point so that $Y$ does not explode in finite time.

For a fixed starting point $(x,y)$ with $0\leq x\leq y$,
alongside the process $Y$
we consider the purely controlled dynamics
\[X^D_t:=x+D_t ,\]
where $D$ is a non-decreasing, right-continuous and $(\F_t)$-adapted process with $D_{0-}=0$.
For a fixed process $D$ and any initial point $(x,y)$ with $0\leq x\leq y$ we let 
\begin{align}\label{eq:gamma}
\gamma:=\gamma_{x,y}(D):=\inf\{t\ge 0 : Y_t\le X^D_t\}
\end{align}
and say that $D$ is {\em admissible} if, in addition, $X^D_{\gamma_{x,y}(D)}\leq Y_{\gamma_{x,y}(D)}$ (notice that this implies $X^D_{\gamma}= Y_{\gamma}$~a.s.). In other words, the process $X$ cannot jump strictly above the process $Y$. We then denote the class of admissible controls by 
\begin{align}\label{eq:Dxy}
\A_{x,y}:=\{D\,:&\,t\mapsto D_t\:\text{is non-decreasing, right-continuous, $(\F_t)$-adapted,}\\ \notag
&\mbox{with $D_{0-}=0$ and }X^D_{\gamma_{x,y}(D)}\leq Y_{\gamma_{x,y}(D)}\}.
\end{align} 

In the problem formulation we find it convenient to use the notations $\P_{x,y}(\,\cdot\,):=\P(\,\cdot\,|X_{0-}=x,Y_{0}=y)$ and $\E_{x,y}[\,\cdot\,]:=\E[\,\cdot\,|X_{0-}=x,Y_{0}=y]$. 
For any $y\ge x\ge 0$ and an arbitrary $D\in\A_{x,y}$, the objective function in our stochastic control problem reads
\begin{align*}
J(x,y;D) := \E_{x,y}\Big[ \int_{[0,\gamma]} e^{-r s} \ud D_s\Big],
\end{align*}
where the integral $\int_{[0,\gamma]}$ is in the 
Lebesgue-Stieltjes
sense, including atoms of the (random) measure $t\mapsto \ud D_t(\omega)$ at times 0 and $\gamma(\omega)$. 
The value function of our problem is then defined as
\begin{equation} \label{eq:dividendvalue}
V(x,y) = \sup_{D \in \A_{x,y}} J(x,y;D).
\end{equation}

\begin{remark}
Problem \eqref{eq:dividendvalue} is a two-dimensional singular stochastic control problem with absorption occurring at the first time the underlying controlled process $(X^D,Y)$ hits the diagonal $\{(x,y):x=y\}$. 
The problem is {\em degenerate} since there is no diffusion in the direction of the controlled dynamics $X^D$. 
\end{remark}

\subsection{A class of solutions for an ODE}

For an arbitrary $c\in(0,\infty)$ the {\em scale function} $S(y)$ of $Y$ reads 
\begin{align}\label{eq:S}
S(y)=\int_{c}^y \exp\left(-\int_{c}^z\frac{2\mu(\lambda)}{\sigma^2(\lambda)}\ud \lambda\right)\ud z.
\end{align}
The infinitesimal generator $\L$ of the process $Y$ killed at a rate $r$ is defined by its action on functions $f\in C^2([0,\infty))$ as
\begin{equation}
\label{op}
\L f (y) = \frac{\sigma^2(y)}{2} f_{yy}(y) + \mu(y) f_y(y) -r f(y).
\end{equation}
Denote by $\psi$ and $\varphi$ two solutions of the ODE $\L f=0$ on $(0,\infty)$ such that $\varphi$
is positive and strictly decreasing and $\psi$ is positive (on $(0,\infty)$) and strictly increasing with 
$\varphi(\infty)=0$ and $\psi(\infty)=\infty$. These functions can be chosen as the fundamental solutions of \eqref{op} and are then unique up to multiplication by a positive constant, if appropriate boundary conditions are imposed at 0 (cf. \cite[Chapter II]{BS}). 
It is then known (see e.g. \cite{BS}), and also easy to verify using $\L\varphi=\L\psi=0$, that 
$S'(y)=C(\varphi(y)\psi'(y)-\psi(y)\varphi'(y))$
for some constant $C>0$. For simplicity, and with no loss of generality, we assume that constants are chosen so that 
\[S'(y)=\varphi(y)\psi'(y)-\psi(y)\varphi'(y).\]

Now let
\begin{align}\label{eq:F}
F(x,y):=  \frac{\sigma^2(y)}{r\delta(x,y)} \bigg[ \Big( \varphi'(y) \psi'(x)-\varphi'(x)\psi'(y)\Big) + \frac{\mu(x)}{\sigma^2(x)} \Big( \varphi'(y) \psi(x)-\varphi(x)\psi'(y)\Big) \bigg] 
\end{align}
for $0<x<y$,
with 
\[\delta(x,y):= \varphi(x)\psi(y) -\varphi(y)\psi(x).\] 
Notice that for $x<y$ we have
\begin{align}\label{eq:gammaneg}
\delta(x,y) > 0
\end{align}
by the strict monotonicity and positivity of $\psi$ and $\varphi$, so the denominator in $F$ is well-defined. Next, consider the nonlinear ODE
\begin{equation} \label{eq:ODE}
b'(x)=F(x,b(x)),\quad x> 0.
\end{equation}
We do not specify an initial datum for the ODE but instead look at solutions from the class 
\begin{align}\label{eq:Bcirc}
\begin{aligned}
\B:=\big\{b\in C^1((0,\infty)) \cap C([0,\infty)):&\,\text{$b$ is a solution of \eqref{eq:ODE}}\\ 
&\,\text{with $F(x,b(x))>0$ and $b(x)>x$ for $x> 0$}\big\}.
\end{aligned}
\end{align}
Notice that as part of the definition of $\B$ we require that it only contains solutions of \eqref{eq:ODE} that do not explode for finite values of $x\in[0,\infty)$.

Given $b\in\B$, the inverse function $b^{-1}:[b(0),\infty)\to[0,\infty)$ is well-defined and 
strictly increasing and when $b(0)>0$ we extend the definition by setting $b^{-1}(y)\equiv 0$ for $y\in[0,b(0))$.
Notice that $b^{-1}\in C([b(0),\infty))\cap C^1((b(0),\infty))$ with 
\[
(b^{-1})'(y)=\frac{1}{F\big(b^{-1}(y),y\big)},\quad \text{for $y> b(0)$.}
\]
We associate with $b\in\B$ a function $v^b$ defined by $v^b(0,0)=0$ and for $y>0$ by
\begin{align}\label{eq:Uc}
v^b(x,y)=
\left\{
\begin{array}{ll}
\bar v^b(x,y), & x\leq y\leq b(x),\\[+5pt]
\bar v^b\big(b^{-1}(y),y\big)+b^{-1}(y)-x, & y> b(x),  
\end{array}
\right.
\end{align}
where
\begin{align}\label{eq:vbbar}
\bar v^b(x,y):=\varphi(y)\int_x^y \frac{\psi'\big(b(z)\big)}{S'\big(b(z)\big)} \ud z-\psi(y)\int_x^y \frac{\varphi'\big(b(z)\big)}{S'\big(b(z)\big)} \ud z.
\end{align}
Since $S'>0$ on $(0,\infty)$ then the integrals in the definition of $\bar v^b(0,y)$ are well-defined for $x\leq y\leq b(x)$ with $y>0$. 

We will show below (Theorem~\ref{thm:mainthm}) that for a certain choice of $b^*\in\B$, the function $v^{b^*}$ coincides with the value function $V$ from \eqref{eq:dividendvalue}. 
The expression for $v^b$ may seem a bit {\em ad-hoc} at the moment but it will be fully motivated in Section \ref{sec:heuristics} below.

\subsection{Solution of the stochastic control problem} We start by stating a lemma for the construction of a suitable class of admissible controls. 
The proof is standard and we provide it in the Appendix for completeness.

\begin{lemma}\label{lem:SK}
Let $y\geq x\geq 0$.
For an arbitrary $b\in\B$, set 
\begin{equation}
D^b_t = \sup _{0\le s\le t}(b^{-1}(Y_s)-x)^+,\quad\P_{x,y}-a.s.
\end{equation}
Then $D^b\in\A_{x,y}$.  
Moreover, letting $X^b_t:=x+D^b_t$ and $\gamma^b:=\inf\{t\ge0\,:\,Y_t\le X^b_t\}$, the pair $(X^b,Y)$ solves the Skorokhod reflection problem
\begin{equation}\label{eq:SKsol}
X^b_{t\wedge\gamma^b}\ge b^{-1}(Y_{t\wedge\gamma^b})\quad\quad\text{and} 
\quad\quad\int_{[0,\,t\wedge\gamma^b]}1_{\{X^b_{s}>b^{-1}(Y_s)\}}\ud D^b_s
=0,
\end{equation}
for all $t\geq 0$, $\P_{x,y}$-a.s.
\end{lemma}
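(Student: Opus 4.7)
The plan is to verify, in sequence, the regularity of $D^b$, the admissibility constraint $X^b_{\gamma^b}\le Y_{\gamma^b}$, and the two Skorokhod conditions in \eqref{eq:SKsol}.

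First, I would observe that any $b\in\B$ is continuous and strictly increasing on $[0,\infty)$ (the latter from $b'=F(\cdot,b)>0$), so $b^{-1}$ is continuous and non-decreasing on $[b(0),\infty)$; extended by $b^{-1}\equiv 0$ on $[0,b(0))$ it remains continuous on all of $[0,\infty)$. Since $Y$ is continuous and adapted, the map $s\mapsto (b^{-1}(Y_s)-x)^+$ inherits both properties, and its running supremum $D^b$ is automatically non-decreasing, continuous on $[0,\infty)$, and $(\F_t)$-adapted, with $D^b_{0-}=0$ by convention. In particular, $D^b$ has at most an initial jump of size $(b^{-1}(y)-x)^+$ and no subsequent atoms.

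Next, for admissibility I would exploit that $b(z)>z$ for $z>0$, which forces $b^{-1}(y)<y$ whenever $b^{-1}(y)>0$ (and $b^{-1}(y)=0<y$ when $y\in(0,b(0))$). Consequently
\[
X^b_0 = x + (b^{-1}(y)-x)^+ = \max(x,b^{-1}(y)) \le y = Y_0,
\]
with equality precisely when $y=x$. If $y=x$, then $\gamma^b=0$ and the constraint is trivial; if $y>x$, then $X^b_0<Y_0$ strictly, so $\gamma^b>0$, and since $X^b$ and $Y$ are both continuous on $(0,\infty)$, passing to the limit $t\uparrow\gamma^b$ in the strict inequality $X^b_t<Y_t$ (which holds on $[0,\gamma^b)$ by definition of $\gamma^b$) yields $X^b_{\gamma^b}=Y_{\gamma^b}$ when $\gamma^b<\infty$. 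Either way, $D^b\in\A_{x,y}$.

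For the Skorokhod conditions, the inequality in \eqref{eq:SKsol} is immediate from
\[
X^b_t = x + D^b_t \ge x + (b^{-1}(Y_t)-x)^+ = \max(x,b^{-1}(Y_t)) \ge b^{-1}(Y_t).
\]
For the support condition the key ingredient is the classical fact that for a continuous function $f$, the Lebesgue--Stieltjes measure $dM$ associated with $M_t=\sup_{s\le t}f(s)$ is concentrated on $\{s:M_s=f(s)\}$. Applying this to $f(s)=(b^{-1}(Y_s)-x)^+$ and $M=D^b$, I split the support of $dD^b$ into two parts: where $D^b_s>0$, the identity $D^b_s=(b^{-1}(Y_s)-x)^+$ forces $D^b_s=b^{-1}(Y_s)-x$ and hence $X^b_s=b^{-1}(Y_s)$, so the indicator $\mathbf{1}_{\{X^b_s>b^{-1}(Y_s)\}}$ vanishes; where $D^b_s=0$, monotonicity and continuity of $D^b$ force $D^b\equiv 0$ on $[0,s]$, so this region carries no $dD^b$-mass at all. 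Combining the two cases gives the reflection identity in \eqref{eq:SKsol}.

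The main technical nicety is the support property of the running supremum (a standard measure-theoretic fact) together with the careful handling of the initial jump when $b^{-1}(y)>x$; everything else is bookkeeping and pointwise continuity arguments.
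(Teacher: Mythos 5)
Your proof is correct and follows essentially the same route as the paper's: admissibility from continuity/monotonicity of $b^{-1}$ and of the running supremum, the inequality in \eqref{eq:SKsol} directly from the definition of $D^b$, and the flatness condition from the fact that a running supremum of a continuous function only charges times where it equals that function (the paper proves this fact directly via a local-constancy argument, which is the same idea you cite as classical). Your extra case split $\{D^b_s>0\}$ versus $\{D^b_s=0\}$ and the explicit computation $X^b_0=\max(x,b^{-1}(y))\le y$ are fine refinements of the same argument, so there is nothing substantive to add.
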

\vspace{+3pt}

We now present the main result of the paper, which is the characterisation of the optimal control in our optimisation problem \eqref{eq:dividendvalue} via the maximality principle. 
\begin{theorem}\label{thm:mainthm}
Fix $b^*\in\B$, and let $v^*(x,y):=v^{b^*}(x,y)$ as in \eqref{eq:Uc}.
Set $D^*:=D^{b^*}$, $X^*:=X^{b^*}$ and $\gamma^*:=\gamma^{b^*}$ as in Lemma~\ref{lem:SK}. If
\begin{equation}\label{eq:integr}
\P_{x,y}(\gamma^*<\infty)=1 \quad\text{and}\quad \E_{x,y}\left[\sup_{t\geq 0}\left\{e^{-rt}v^*(x,Y_t)\right\}\right]< \infty
\end{equation}
for $0< x\le y$, 
then $D^*$ is an optimal control in \eqref{eq:dividendvalue} and $v^*$ coincides with $V$, i.e.,
\begin{align}\label{eq:vV}
V(x,y)=J(x,y;D^*)=v^*(x,y)
\end{align} 
for $0< x\le y$.
Moreover,
\begin{equation}
\label{inf}
b^*(y)=\inf_{b\in\B}b(y).
\end{equation}
\end{theorem}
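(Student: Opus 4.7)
The plan is to prove the theorem in two stages: first the identity $V=v^*$ with optimality of $D^*$ under \eqref{eq:integr}, then the maximality assertion $b^*=\inf_{b\in\B}b$ by a Peskir-type argument. Both stages rest on a verification via Itô's formula applied to $v^b$ for $b\in\B$.

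The central step is to establish, for every $b\in\B$, the following HJB-type properties of $v^b$, all of which can be read off from \eqref{eq:Uc}--\eqref{eq:vbbar}: (i) $\L v^b(x,\cdot)=0$ on the continuation region $\{x<y<b(x)\}$, since $\bar v^b$ is an $x$-parametrised linear combination of the $\L$-harmonic pair $\varphi,\psi$; (ii) $v^b_x\equiv-1$ on the action region $\{y>b(x)\}$; (iii) $v^b(x,x)=0$, because the integrals in \eqref{eq:vbbar} vanish on the diagonal; (iv) $C^1$ smooth fit across the barrier, where the Wronskian identity $\varphi\psi'-\psi\varphi'=S'$ forces $\bar v^b_x(x,b(x))=-1$; and (v) the super-solution bounds $v^b_x\le -1$ on $\{y<b(x)\}$ (which amounts to $k(y;b(x))\le -1$ for the Wronskian kernel $k(y;b):=(\psi(y)\varphi'(b)-\varphi(y)\psi'(b))/S'(b)$, with equality only at $y=b$) and $\L v^b\le 0$ on $\{y>b(x)\}$ (which, using $\L\bar v^b=0$ in $y$ together with the ODE \eqref{eq:ODE}, reduces to $r(x-b^{-1}(y))\le 0$). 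Applying Itô's formula to
\[
M_t:=e^{-r(t\wedge\gamma)}v^b(X^D_{t\wedge\gamma},Y_{t\wedge\gamma})+\int_{[0,\,t\wedge\gamma]}e^{-rs}\ud D_s,
\]
for an arbitrary $D\in\A_{x,y}$, properties (i)--(v) make $M$ a local supermartingale. For $b=b^*$, \eqref{eq:integr} together with $v^*(Y_\gamma,Y_\gamma)=0$ on $\{\gamma<\infty\}$ permit the limit $t\to\infty$ and give $v^*\ge V$; specialising $D=D^*$ then uses \eqref{eq:SKsol} to place the support of $\ud D^*$ on $\{X^*_s=(b^*)^{-1}(Y_s)\}$ where $v^*_x+1=0$ and $\L v^*=0$, collapsing the Itô estimate into the equality $J(x,y;D^*)=v^*(x,y)$.

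For the maximality claim, property (v) applies for every $b\in\B$, so the same argument — combined with a localisation at $\tau_n:=\inf\{t\ge 0:Y_t\ge n\}\wedge n$ and Fatou to bypass \eqref{eq:integr} for a generic $b$ — yields $V\le v^b$ for every $b\in\B$, and hence $v^{b^*}\le v^b$ pointwise. Since $F$ is locally Lipschitz on $\{0<x<y\}$, distinct members of $\B$ are strictly non-crossing on $(0,\infty)$. Suppose toward contradiction that $b<b^*$ at some point; then $b<b^*$ everywhere. For any $(x_0,y_0)$ with $b(x_0)<y_0<b^*(x_0)$, I would argue $v^b_x(\tilde x,y_0)>v^{b^*}_x(\tilde x,y_0)$ strictly on $[x_0,y_0]$: on $[x_0,b^{-1}(y_0)]$ this holds because $v^b_x=-1$ (action of $b$) whereas $v^{b^*}_x=k(y_0;b^*(\tilde x))<-1$ (continuation of $b^*$); on $[b^{-1}(y_0),y_0]$ both are in continuation and the monotonicity $\partial_b k(y;b)<0$ for $b>y$ gives the same inequality. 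Integrating the strict inequality from $x_0$ up to the diagonal point $(y_0,y_0)$, where both functions vanish by (iii), yields $v^b(x_0,y_0)<v^{b^*}(x_0,y_0)$, contradicting $v^{b^*}\le v^b$. Hence $b\ge b^*$ on $(0,\infty)$ for every $b\in\B$, which is \eqref{inf}.

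The main technical obstacle is establishing (v) uniformly in $b\in\B$. The inequality $k(y;b)\le -1$ for $y\le b$, together with the monotonicity $\partial_b k(y;b)<0$ for $b>y$, encode a concavity-type property of the Wronskian $\varphi\psi'-\psi\varphi'$ that is not transparent from the defining ODE $\L\varphi=\L\psi=0$ alone; it must be extracted from the sign structure of $\varphi,\psi,S'$, reinforced by the constraint $F>0$ built into $\B$ and the regularity of $Y$. Equally delicate is the localisation-plus-Fatou step that dispenses with \eqref{eq:integr} for a generic $b\in\B$, since controlling the boundary term $e^{-r\tau_n}v^b(X^D_{\tau_n},Y_{\tau_n})$ relies on the growth of $v^b$ at the natural boundary at infinity being at most of order $\psi(Y)$, which in turn depends on the asymptotic behaviour of the barrier $b$.
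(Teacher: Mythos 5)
Your architecture is sound and, for the identity \eqref{eq:vV}, it follows essentially the paper's own route (Propositions \ref{fbp} and \ref{prop:Vaineq}): establish the free-boundary/supersolution properties of $v^b$, apply It\^o's formula to obtain $v^*\geq J(x,y;D)$ for every admissible $D$, and use the Skorokhod conditions \eqref{eq:SKsol} together with \eqref{eq:integr} and $\gamma^*<\infty$ to turn the inequality into an equality for $D^*$. One correction of emphasis: the bound $v^b\geq V$ for a generic $b\in\B$ requires no integrability and no control of the boundary term whatsoever — after taking expectations one simply discards $e^{-r(t\wedge\gamma_n)}v^b\geq 0$ and uses monotone convergence on the dividend integral — so the ``delicate localisation-plus-Fatou'' step you flag at the end is not an issue; the assumptions \eqref{eq:integr} are needed only in the reverse direction, where one dominates $e^{-rt}v^*(X^*_t,Y_t)\leq e^{-rt}v^*(x,Y_t)$ via $v^*_x\leq-1$. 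Your route to \eqref{inf} is genuinely different from the paper's: you combine the universal bound $V\leq v^b$ with a pointwise strict comparison $v^b<v^{b^*}$ when $b<b^*$, read off from monotonicity of the kernel $k(y;\beta)=(\psi(y)\varphi'(\beta)-\varphi(y)\psi'(\beta))/S'(\beta)$ in $\beta$; the paper instead proves the strict ordering $b_1>b_2\Rightarrow v^{b_1}>v^{b_2}$ by a maximum-principle argument on $v^{b_1}_x-v^{b_2}_x$ (Proposition \ref{prop:increasing}) and then shows that a smaller $b\in\B$ would inherit \eqref{eq:integr} (since $v^b<v^{b^*}$ and $\gamma^b\leq\gamma^*$) and hence also represent $V$, a contradiction (Corollary \ref{rem:unique}). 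Your variant is legitimate and arguably more direct, since it never needs to re-verify \eqref{eq:integr} for the smaller boundary.

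The genuine gap is that the two inequalities on which both halves of your proof rest — the supersolution property $v^b_x\leq-1$ in the continuation region (your $k(y;\beta)\leq-1$ for $y\leq\beta$, which drives the supermartingale estimate and hence $V\leq v^b$) and the monotonicity $\partial_\beta k(y;\beta)<0$ for $\beta>y$ (which drives the comparison of $v^b$ and $v^{b^*}$) — are asserted but explicitly left unproven and labelled the ``main technical obstacle''. They do hold, and can be supplied in either of two ways. The paper proves $v^*_x\leq-1$ by an ODE maximum-principle argument on $y\mapsto v^*_x(x,y)$, using that it solves \eqref{eq:vy} with value $-1$ and vanishing $y$-derivative at $y=b^*(x)$, so that $\partial_{yy}v^*_x<0$ at any interior point where $\partial_y v^*_x$ would vanish; the same scheme gives the strict comparison in Proposition \ref{prop:increasing}. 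Alternatively, in the spirit of your kernel formulation, a direct computation using $\L\varphi=\L\psi=0$ and $S''/S'=-2\mu/\sigma^2$ yields
\begin{equation*}
\partial_\beta k(y;\beta)=-\frac{2r\,\delta(y,\beta)}{\sigma^2(\beta)\,S'(\beta)}<0\qquad\text{for }\beta>y,
\end{equation*}
by \eqref{eq:gammaneg}; this is exactly the algebra behind \eqref{eq:vyyb}--\eqref{eq:vbar-yy}. Since $k(y;y)=-1$, this single identity delivers both $k(y;\beta)<-1$ for $\beta>y$ and the monotonicity you invoke, so the ``obstacle'' is a short calculation rather than a structural difficulty. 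With that supplied your argument closes, provided you also record the second-order fit $\bar v^b_{xy}(x,b(x))=0$ (not merely continuity of $v^b_x$) when reducing $\L v^b$ in the action region to $-r\big(b^{-1}(y)-x\big)\leq 0$, as in \eqref{eq:vyy}.
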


The proof of Theorem \ref{thm:mainthm} and further properties of the value function are presented in Sections \ref{sec:proof} and \ref{sec:max} below. 

\begin{remark}\label{rem:mainth}
In Section~\ref{sec:max} we specify general conditions under which solutions of \eqref{eq:ODE} in $\B$ exist and are ordered, 
so that $b^*$ is the minimal element of $\B$ as stated in \eqref{inf}. We notice that Peskir \cite{P98} works with boundaries that are below the diagonal, so `maximal' in his setting and `minimal' in our setting are equivalent notions.
\end{remark}

\begin{figure}
\includegraphics[scale=0.2]{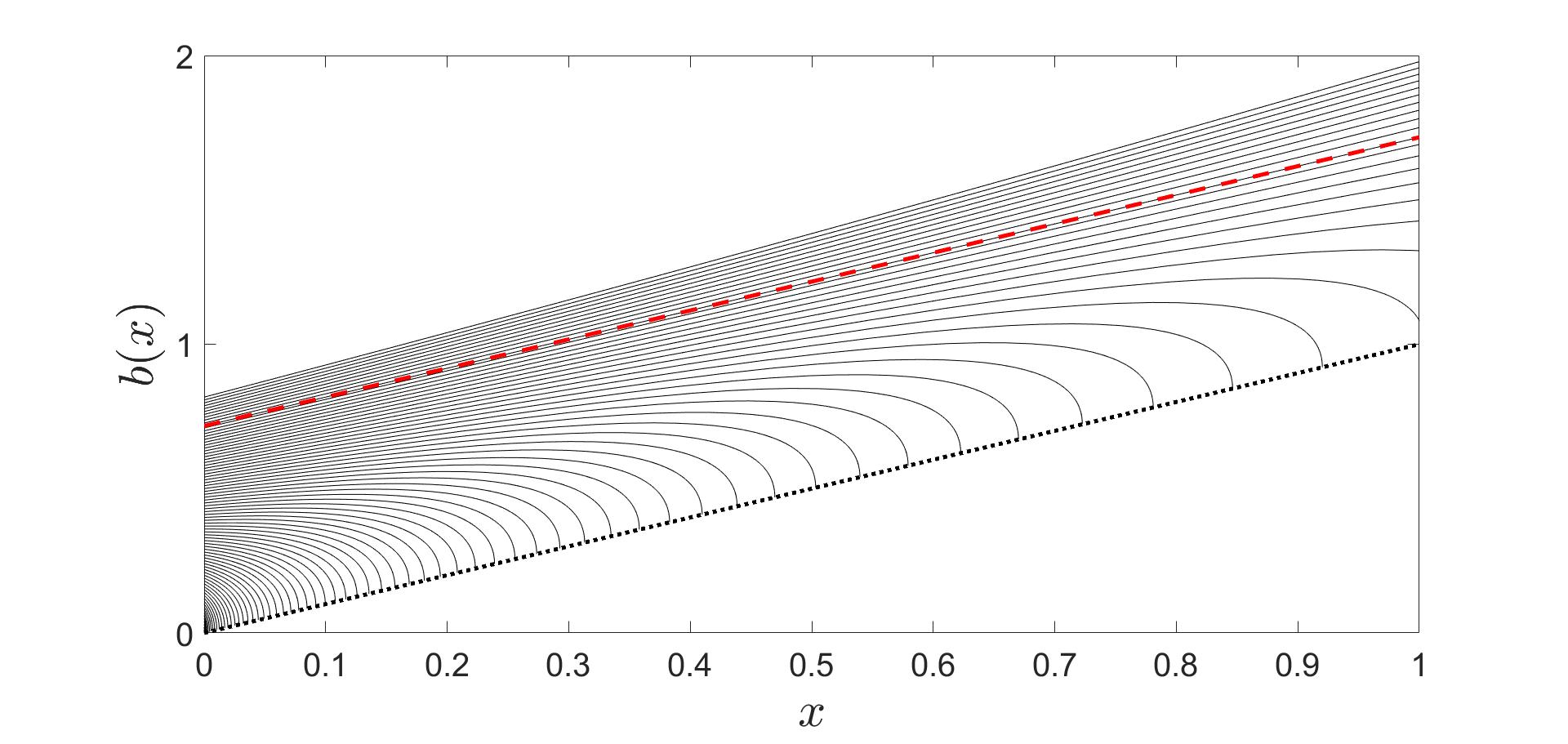}
\caption{Solutions to the ODE \eqref{eq:ODE} for varying initial data using $r=0.05$ and constant coefficients $\mu \equiv 0.04 $ and $\sigma\equiv 0.3$. In this setting, the optimal dividend strategy is described by an affine boundary $b^\ast$ (dashed line). All solutions below $b^\ast$ hit the diagonal (dotted line). Solutions are obtained using MATLABs ODE-solver \texttt{ode15s}.}
\label{fig1}
\end{figure}

\begin{remark}
It will be clarified in Corollaries \ref{rem:unique} and \ref{cor:bb} that there can be at most one function $b^*\in\B$ that satisfies the integrability conditions \eqref{eq:integr}. Hence, it must be the minimal element in $\B$ because any other element $b\in\B$ is associated to a larger $\gamma^b$ by continuity of paths of $(X^b,Y)$. 
\end{remark}


\section{Heuristic derivation of the variational problem} \label{sec:heuristics}
\label{sec6}

The construction of our solution to the singular control problem via the {\em maximality principle} in Theorem \ref{thm:mainthm} can be derived from heuristic ideas that we illustrate in this section.

In line with the literature on the dividend problem, it is intuitively clear that control should be exerted when $Y$ is sufficiently bigger than $X$, so that the risk of bankruptcy remains small. At the same time, waiting is penalised by discounting the future payoff, so that it would not be optimal to wait indefinitely for ever larger values of $Y$. In contrast with the classical set-up, where dividend payments affect directly the diffusive dynamics $Y$ of the firm's equity capital, here the decision to make a dividend payment should depend on the amount of dividends that have already been paid. So, letting $x>0$ denote the total amount of dividends paid so far, we expect that there should exist a critical value $b(x)>x$ such that no further dividends are paid at times $t\ge 0$ such that $x<Y_t<b(x)$. 

As long as it is optimal to pay no dividends, the discounted value of the problem should remain constant on average, i.e., $t\mapsto e^{-rt}V(x,Y_t)$ should be a {\em martingale} for as long as $x<Y_t<b(x)$. Moreover, if an amount $\delta>0$ of control is used at time zero, the resulting payoff is at most $\delta+V(x+\delta,y)$ and, in general, one has $\delta+V(x+\delta,y)\le V(x,y)$. Dividing by $\delta$ and letting $\delta\to 0$, we expect that if $x<y<b(x)$ then $V_x(x,y)<-1$, because exerting control is strictly sub-optimal (of course assuming that $V$ is smooth). On the contrary, we expect that for $y\ge b(x)$ exerting control be optimal, hence $V_x(x,y)=-1$.
Finally, it is clear by the problem formulation that if $x=y$, then $\gamma=0$, $\P$-a.s., and $V(x,x)=0$.

The informal discussion above translates into the following free boundary problem: find a pair $(V,b)$  that satisfies
\begin{align} \label{eq:dividend}
\begin{array}{lll}
(i) & \L V (x,y)= 0 &\mbox{for $x<y<b(x)$}, \\
(ii) & V(x,x) =0 &\mbox{for all $x> 0$},\\
(iii)& V_x(x,y) = -1 &\mbox{for $y \ge b(x)$, $y>0$}, \\
(iv) & V_x(x,y)\le -1&\mbox{for all {$y\ge x> 0$}},\\
(v) & \L V (x,y)\le 0& \mbox{for a.e.\ $y\ge x> 0$}.
\end{array}
\end{align}
The first equation $(i)$ corresponds to the martingale property of $t\mapsto e^{-rt}V(x,Y_t)$ when $x<Y_t<b(x)$. The second equation $(ii)$ is the absorption condition, whereas $(iii)$ and $(iv)$ identify the optimal boundary in terms of the so-called marginal cost of exerting control. Finally, condition $(v)$ relates to the super-martingale property of the value process. Common wisdom on singular control problems with dynamics similar to ours (e.g., \cite{GT,MZ}) suggests that we should additionally impose a so-called smooth-fit condition at the boundary of the form
\begin{align}\label{eq:smfit-h}
V_{xy}(x,b(x))=0,\quad x> 0.
\end{align}

First, plugging the boundary condition $V(x,x)=0$ into $(i)$ of \eqref{eq:dividend} we get
\begin{equation}\label{eq:boundaryeq}
\frac{\sigma^2(y)}{2} V_{yy}(x,x) + \mu(x) V_y(x,x)=0.
\end{equation}
Second, formally differentiating $V(x,x)=0$ twice with respect to $x$, we get
\begin{equation}\label{eq:boundaryeq2}
V_x(x,x)+V_y(x,x)=0 \quad \mbox{and} \quad	V_{xx}(x,x) + V_{yy}(x,x) + 2V_{xy}(x,x) =0.
\end{equation}
From \eqref{eq:boundaryeq} and the first equation in \eqref{eq:boundaryeq2} we get
\[
\frac{\sigma^2(x)}{2} V_{yy}(x,x)=\mu(x)V_x(x,x).
\]
Substituting in the second equation of \eqref{eq:boundaryeq2} we arrive at 
\begin{equation}\label{eq:lastcondition}
\frac{\sigma^2(x)}{2}(V_{xx}(x,x)+ 2V_{xy}(x,x))+ \mu(x) V_x(x,x)=0.
\end{equation}

At this point we notice that it is possible to reduce the free boundary problem for $(V,b)$ to a somewhat easier one for $(V_x,b)$. Such simplification leads us to the particular choice of candidate solutions of the form $v^b$ described in \eqref{eq:Uc} and to the connection with problems of optimal stopping.
This can be viewed as the extension of the original ideas in \cite{BC} to the current case of two-dimensional degenerate dynamics with absorption.
Indeed, setting $u:=-V_x$ and differentiating with respect to $x$ equation $(i)$ in \eqref{eq:dividend} we obtain a boundary value problem 
\begin{align} \label{eq:u}
\begin{array}{lll}
(i) & \L u (x,y)= 0 &\mbox{for $0<x<y<b(x)$}, \\
(ii)& u(x,y) = 1 &\mbox{for $y \ge b(x)$}, \\
(iii) & u(x,y)\ge 1 &\mbox{for $y\geq x>0$}.
\end{array}
\end{align}
The condition \eqref{eq:smfit-h} translates into the classical smooth-fit condition for $u$:
\begin{align}\label{eq:smf-u}
u_y(x,b^*(x))=0,\quad x> 0.
\end{align}
Moreover, the boundary condition \eqref{eq:lastcondition} on the diagonal translates into a reflection/creation equation
\begin{equation}
\label{ubc}
\frac{\sigma^2(x)}{2}\left(u_x(x,x)+2u_y(x,x)\right)+\mu(x)u(x,x)=0.
\end{equation}

A solution of $(i)$ in \eqref{eq:u} must be of the form
\[
u(x,y)=A(x)\psi(y)+B(x)\varphi(y),
\]
by definition of functions $\varphi$ and $\psi$ introduced in Section \ref{sec:setting}.
Continuous-fit $u(x,b(x))=1$ ($(ii)$ in \eqref{eq:u}) gives
\[
A(x)\psi(b(x))+B(x)\varphi(b(x))=1
\]
and the smooth-fit \eqref{eq:smf-u} implies 
\[
A(x)\psi'(b(x))+B(x)\varphi'(b(x))=0.
\]
Solving for $A$ and $B$ we obtain 
\begin{equation}\label{eq:constants}
A(x)=-\frac{\varphi'(b(x))}{S'(b^*(x))} \quad \mbox{and} \quad B(x)=\frac{\psi'(b(x))}{S'(b(x))}
\end{equation}
where we recall that $S'=\varphi\psi'-\psi\varphi'$. The ansatz $u=-V_x$ gives 
\begin{align}\label{eq:v-conj}
V(x,y):=\varphi(y)\int_x^y \frac{\psi'(b(z))}{S'(b(z))}\ud z-\psi(y)\int_x^y \frac{\varphi'(b(z))}{S'(b(z))} \ud z ,\quad \text{for $x<y<b(x)$},
\end{align}
by simply taking $V(x,y)=\int_x^y u(z,y)\ud z$ so that $V(x,x)=0$. Thus we have obtained exactly the expression of $\bar v^{b}$ in \eqref{eq:Uc}.

Next, we make use of \eqref{eq:lastcondition} (or equivalently of \eqref{ubc}) to determine the equation for $b$. Computing the derivatives $V_x$, $V_{xx}$ and $V_{xy}$ directly from \eqref{eq:v-conj}
and imposing \eqref{eq:lastcondition} we find that $b$ must solve \eqref{eq:ODE} (these calculations are performed below in \eqref{eq:vbar-y}, \eqref{eq:vbar-xy} and \eqref{eq:vbar-yy}). 
That completes a heuristic derivation of \eqref{eq:Uc} and \eqref{eq:ODE}.

It can be checked with tedious but straightforward algebra that if $b$ solves \eqref{eq:ODE}, then 
\begin{equation}
\label{LV}
\L V(x,y)=\int_x^y\L u(z,y)\,\ud z.
\end{equation}
If in addition to $(i)-(iii)$ in \eqref{eq:u}, also 
$\L u\leq 0$ a.e., then \eqref{LV} implies that 
$(v)$ in \eqref{eq:dividend} is fulfilled. It turns out that the condition $\L u\leq 0$ can be obtained by defining the function $u$ as the value function of a suitable optimal stopping problem for a carefully specified, two-dimensional degenerate diffusion that gives rise to the reflection/creation condition \eqref{ubc} (see Section~\ref{sec5} for details). 
\begin{remark}
Conditions $(i)$ in \eqref{eq:dividend} and $\L u\leq 0$ a.e.\ hold simultaneously only if the function $b$ solving \eqref{eq:ODE} is non-decreasing. In general \eqref{eq:ODE} could exhibit non-monotonic solutions and the set $\B$ in \eqref{eq:Bcirc} could be empty. In that case there seems to be no connection between the derivative $V_x$ of our singular control problem and the value $u$ of a stopping problem.
\end{remark}

\begin{remark}\label{rem:extensions}
It is clear at this point that an analogous heuristic procedure could be applied to problems with a more general structure of the payoff as, e.g.,
\[
J(x,y;D)=\E_{x,y}\Big[\int_0^\gamma e^{-rt}f(X^D_t,Y_t)\ud t+\int_{[0,\gamma]}e^{-rt}\ud D_t+e^{-r\gamma}g(X^D_\gamma)\Big].
\]
Some changes are required in the free boundary problem in Eq.\ \eqref{eq:dividend}. In particular, in $(i)$ and $(v)$ one has $-f(x,y)$ on the right-hand side of the expressions and in $(ii)$ one has $V(x,x)=g(x)$. Then, making the appropriate changes in the derivation above we can obtain the candidate expression for $V$ and the ODE for $b^*$. Of course, it is a difficult task to determine whether the ODE for the boundary admits a minimal solution that stays above the diagonal and, in general, this should not be expected. Nevertheless, it is an interesting question to find sufficient conditions for the applicability of the maximality principle in such more general setting. We leave it for future study.
\end{remark}


\section{Proof of Eq.\ \eqref{eq:vV} in Theorem~\ref{thm:mainthm}}\label{sec:proof}
\label{sec3}

In this section we prove the first result in Theorem \ref{thm:mainthm}: $V=v^*$ for $0<x\leq y$. We thus enforce throughout that the assumptions of the theorem are fulfilled, 
i.e., 
\[
\text{$b^*\in\B$ with $\P_{x,y}(\gamma^{*}< \infty)=1$ and}\:\: \E_{x,y}\Big[\sup_{t\geq 0}\left\{e^{-rt}v^*(x,Y_t)\right\}\Big]< \infty.
\]
One may notice that a.s.\ finiteness of $\gamma^*$ and the integrability condition for $v^*$ are not needed to prove Proposition \ref{fbp}. Instead those conditions will be used in the proof of the subsequent Proposition \ref{prop:Vaineq}.

We denote $U^\circ:=\{(x,y):0< x\leq y\}$ and recall that $v^*=v^{b^*}$ as in \eqref{eq:Uc}.

\begin{proposition}\label{fbp}
We have $v^*,v^*_{y}\in  C^1(U^\circ)$ and the function $v^*$ satisfies
\begin{align} \label{eq:thPDE}
\begin{array}{lll}
(i) & \L v^* (x,y)= 0 &\mbox{for {$x< y< b^*(x)$}}, \\
(ii) & v^*(x,x) =0 &\mbox{for all $x> 0$},\\
(iii)& v^*_x(x,y) = -1 &\mbox{for $y \ge b^*(x)$, {$y>0$}}, \\
(iv)& v^*_x(x,y)\le -1 &\mbox{for $(x,y)\in U^\circ$},\\
(v) & \L v^*(x,y)\le 0 &\mbox{for  $(x,y)\in U^\circ$},
\end{array}
\end{align}
where $\L$ acts on the second variable in (i) and (v). Moreover, 
the additional boundary conditions 
\begin{align}\label{eq:smf+ref}
v^*_{xy}(x,b^*(x))=0\quad\text{and}\quad\frac{\sigma^2(x)}{2}(v^*_{xx}(x,x)+ 2v^*_{xy}(x,x))+ \mu(x) v^*_x(x,x)=0
\end{align}  
hold for $x>0$.
\end{proposition}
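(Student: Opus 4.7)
The plan is to verify \eqref{eq:thPDE} and \eqref{eq:smf+ref} by direct computation on the two branches of \eqref{eq:Uc}, relying on two facts: $\bar v^*(x,\cdot)$ is built from $\psi,\varphi$, which lie in the kernel of $\L$, and the ODE \eqref{eq:ODE} satisfied by $b^*$ is precisely the compatibility condition needed to make the boundary terms from a Leibniz expansion of $\L\bar v^*$ vanish. Condition (ii) is immediate from \eqref{eq:vbbar}, and (iii) is immediate by differentiating the second branch of \eqref{eq:Uc} in $x$.

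For (i), I would write
\[
\bar v^*(x,y) = \int_x^y u^*(z,y)\,\ud z, \qquad u^*(z,y) := -\frac{\varphi'(b^*(z))}{S'(b^*(z))}\psi(y) + \frac{\psi'(b^*(z))}{S'(b^*(z))}\varphi(y),
\]
and observe that $\L u^*(z,\cdot)\equiv 0$ because $u^*$ is a linear combination of $\psi$ and $\varphi$ in its second variable. Two applications of Leibniz's rule then reduce $\L\bar v^*$ to
\[
\L \bar v^*(x,y) = \frac{\sigma^2(y)}{2}\bigl(u^*_z(y,y) + 2u^*_y(y,y)\bigr) + \mu(y)\,u^*(y,y),
\]
a function of $y$ alone which coincides with the left-hand side of the reflection/creation equation \eqref{ubc}. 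Since Section \ref{sec6} identifies \eqref{ubc} as an equivalent form of the ODE \eqref{eq:ODE}, and $b^*\in\B$, this quantity vanishes and (i) follows.

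For $v^*, v^*_y \in C^1(U^\circ)$ and the identities \eqref{eq:smf+ref}, I would match one-sided derivatives across $\{y=b^*(x)\}$: on the continuation side $\bar v^*_x(x,y) = -u^*(x,y)$, and the identities $u^*(x,b^*(x))=1$ (continuous-fit) and $u^*_y(x,b^*(x))=0$ (smooth-fit), both baked into the choice of $u^*$, are exactly what is needed to splice smoothly with the action branch (where $v^*_x\equiv -1$). The first identity in \eqref{eq:smf+ref} is then $-u^*_y(x,b^*(x))=0$, and the second follows by differentiating $v^*(x,x)=0$ twice along the diagonal and combining with the limit $\L v^*(x,y)\to 0$ as $y\to x^+$, exactly as in the derivation \eqref{eq:boundaryeq}--\eqref{eq:lastcondition}.

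For the variational inequalities, I would set $\rho(y):=\varphi'(y)/\psi'(y)$; using $\L\psi=\L\varphi=0$ one finds $\rho'(y) = 2rS'(y)/(\sigma^2(y)\psi'(y)^2) > 0$, and a short factorisation then yields
\[
u^*_y(x,y) = \frac{\psi'(b^*(x))\psi'(y)}{S'(b^*(x))}\bigl(\rho(y)-\rho(b^*(x))\bigr),
\]
which is strictly negative for $y\in(x,b^*(x))$; combined with $u^*(x,b^*(x))=1$ this gives $u^*\ge 1$ on the continuation region and hence (iv). For (v), (i) handles the continuation region; in the action region I would use $v^*(x,y)=\bar v^*(b^{*-1}(y),y)+b^{*-1}(y)-x$ together with $u^*=1$ and $u^*_y=0$ on the free boundary to kill the chain-rule corrections, obtaining $\L v^*(x,y) = \L\bar v^*(b^{*-1}(y),y) - r(b^{*-1}(y)-x) = -r(b^{*-1}(y)-x)\le 0$, the inequality holding because $b^{*-1}(y)\ge x$ on $\{y\ge b^*(x)\}$. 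The subtlest ingredient is the monotonicity of $\rho$ used in (iv) to rule out additional critical points of $u^*(x,\cdot)$ inside $(x,b^*(x))$; it is this step that ultimately links the positivity hypothesis $F(x,b^*(x))>0$ defining $\B$ to the global variational inequality.
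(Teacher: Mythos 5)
Your proposal is correct, and for most of the statement it follows the paper's own route: $(ii)$, $(iii)$ are read off \eqref{eq:Uc}; $(i)$ comes from the fact that $\L$ annihilates the integrand of $\bar v^*$ so that only the diagonal boundary term $\tfrac{\sigma^2(y)}{2}\bigl(u^*_z(y,y)+2u^*_y(y,y)\bigr)+\mu(y)u^*(y,y)$ survives, which vanishes because $b^*$ solves \eqref{eq:ODE} (the paper does exactly this in \eqref{eq:refl}, after the explicit computations \eqref{eq:vbar-y}--\eqref{eq:vbar-yy}; note that the equivalence ``ODE $\Leftrightarrow$ reflection identity'' is not something you can simply cite from the heuristic section, but it is precisely that short algebra, so no real gap); the $C^1$ gluing across $\{y=b^*(x)\}$ via continuous and smooth fit of $u^*$, and the computation $\L v^*(x,y)=-r\bigl((b^*)^{-1}(y)-x\bigr)$ in the action region for $(v)$, are also the paper's arguments. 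Where you genuinely diverge is $(iv)$: the paper shows $\partial_y v^*_x(x,\cdot)>0$ on $(x,b^*(x))$ by a maximum-principle-type contradiction argument on the one-dimensional problem \eqref{eq:vy}, whereas you obtain it in closed form from the factorisation $u^*_y(x,y)=\tfrac{\psi'(b^*(x))\psi'(y)}{S'(b^*(x))}\bigl(\rho(y)-\rho(b^*(x))\bigr)$ with $\rho=\varphi'/\psi'$ and $\rho'=2rS'/(\sigma^2(\psi')^2)>0$. Your version is more elementary and makes the strict inequality $v^*_x<-1$ in the continuation region completely explicit; the paper's comparison-type argument has the advantage that it recycles verbatim in Proposition \ref{prop:increasing}, where one compares two different boundaries and no single explicit formula is available. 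One small correction to your closing remark: the positivity $F(x,b^*(x))>0$ is not what drives $(iv)$ --- your own factorisation shows $(iv)$ holds for every $b\in\B$ regardless (consistent with Remark \ref{rem:fbp}); what $F>0$ buys is the strict monotonicity of $b^*$, hence a $C^1$ inverse, which is needed for the action-region branch of \eqref{eq:Uc}, the gluing, and the sign argument in $(v)$.
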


\begin{proof}
Throughout the proof we use the notation $\bar v^*=\bar v^{b^*}$ as in \eqref{eq:vbbar}. Conditions $(ii)$ and $(iii)$ in \eqref{eq:thPDE} follow from \eqref{eq:Uc}.
The continuity of $v^*_x$ is immediate using $C^1$-regularity of $b^*$ and of its inverse $(b^*)^{-1}$ and recalling that $S'=\varphi\psi'-\varphi'\psi$ (notice in particular that $\bar v^*_x(b^{-1}_*(y),y)=-1$, which will be used next). For the continuity of $v_y^*$ take $y> b^*(x)$ (i.e., $x< (b^*)^{-1}(y)$), so that it follows from \eqref{eq:Uc}
\begin{align}\label{eq:vb-x}
v^*_y(x,y)=&\bar v^*_y\big((b^*)^{-1}(y),y\big)\\
&+\big[\bar v^*_x\big((b^*)^{-1}(y),y\big)+1\big]
\frac{1}{(b^*)'\circ (b^*)^{-1}(y)}
=\bar v^*_y\big((b^*)^{-1}(y),y\big),\notag
\end{align}
where the final equality uses $\bar v^*_x((b^*)^{-1}(y),y)=-1$. Since it is easy to check that $v_y^*$ is continuous separately in the sets $y> b^*(x)$ and $y< b^*(x)$, then \eqref{eq:vb-x} also guarantees continuity across the boundary $b^*$.

Next we prove that \eqref{eq:smf+ref} holds. We have   
\begin{align}\label{eq:vbar-y}
\bar v^*_x(x,y)=\psi(y)\frac{\varphi'\big(b^*(x)\big)}{S'\big(b^*(x)\big)}-\varphi(y)\frac{\psi'\big(b^*(x)\big)}{S'\big(b^*(x)\big)}
\end{align}
and 
\begin{align}\label{eq:vbar-xy}
\bar v^*_{xy}(x,y)=\psi'(y)\frac{\varphi'\big(b^*(x)\big)}{S'\big(b^*(x)\big)}-\varphi'(y)\frac{\psi'\big(b^*(x)\big)}{S'\big(b^*(x)\big)}.
\end{align}
Then, for $x\leq y<b^*(x)$ 
we have $v^*_{xy}(x,y)=\bar v^*_{xy}(x,y)$ and for $y> b^*(x)$ we have $v^*_{xy}(x,y)=0$. Hence, we conclude that $v^*_{xy}\in C(U^\circ)$ by taking limits in \eqref{eq:vbar-xy} for $(x,y)$ converging to the boundary (i.e., $y = b^*(x)$) where $v^*_{xy}(x,b^*(x))=0$.

In order to check the second condition in \eqref{eq:smf+ref} we must compute $v^*_{xx}$. Since $v^*=\bar v^*$ close to the diagonal $x=y$, differentiating \eqref{eq:vbar-y} and then setting $x=y$ we obtain
\begin{align}\label{eq:vyyb}
v^*_{xx}(x,x)
&=\frac{ (b^*)'(x)}{S'\big(b^*(x)\big)}\bigg[\psi(x)\left(\varphi''\big(b^*(x)\big)-\varphi'\big(b^*(x)\big)\frac{S''\big(b^*(x)\big)}{S'\big(b^*(x)\big)}\right)\\
&\qquad\qquad\qquad\qquad-\varphi(x)\left(\psi''\big(b^*(x)\big)-\psi'\big(b^*(x)\big)\frac{S''\big(b^*(x)\big)}{S'\big(b^*(x)\big)}\right)\bigg].\notag
\end{align}
The latter expression can be substantially simplified by using that 
\[
\frac{S''\big(b^*(x)\big)}{S'\big(b^*(x)\big)}=-\frac{2\mu\big(b^*(x)\big)}{\sigma^2\big(b^*(x)\big)}
\] 
combined with the fact that $\L\varphi=\L\psi=0$. Then we get
\begin{align}\label{eq:vbar-yy}
v^*_{xx}(x,x)=(b^*)'(x)\frac{2 r}{\sigma^2\big(b^*(x)\big) S'\big(b^*(x)\big)}\left[\psi(x)\varphi\big(b^*(x)\big)-\varphi(x)\psi\big(b^*(x)\big)\right].
\end{align}
Putting together \eqref{eq:vbar-y}, \eqref{eq:vbar-xy} and \eqref{eq:vbar-yy} and using that $(b^*)'(x)=F(x,b^*(x))$ we obtain the second equation in \eqref{eq:smf+ref}.

Since we have already proven that $v^*_y$ and $v^*_{xy}$ are continuous in $U^\circ$, it remains to show that $v^*_{yy}$ is also continuous to conclude that $v^*_y\in C^1(U^\circ)$. Since $b\in C^1\big((0,\infty)\big)$ it is easy to check that 
$\bar v^*_{yy}\in C(U^\circ)$.
In order to show that $v^*_{yy}$ is also continuous across the boundary, we differentiate \eqref{eq:vb-x} once more and use the first condition in \eqref{eq:smf+ref} to get
\begin{align}\label{eq:vyy}
v^*_{yy}(x,y)=\bar v^*_{yy}\big((b^*)^{-1}(y),y\big),\quad \text{for all $y>b^*(x)$ (i.e., $x<(b^*)^{-1}(y)$)}.
\end{align}
Since $v^*_{yy}(x,y)=\bar v^*_{yy}(x,y)$ for $x<y<b^*(x)$ we have the desired regularity across the boundary.

We next show $(i)$ in \eqref{eq:thPDE}. By direct calculations {on \eqref{eq:Uc}} and $\L\varphi=\L\psi=0$ we obtain for $x<y<b^*(x)$
\begin{align*}
\L v^*(x,y)
&=\frac{\sigma^2(y)}{2}\bigg\{(b^*)'(y)\Big[
\frac{\varphi(y)}{S'\big(b^*(y)\big)}\Big(\psi''\big(b^*(y)\big)-\frac{S''\big(b^*(y)\big)}{S'\big(b^*(y)\big)}\psi'\big(b^*(y)\big)\Big) \\
&\qquad\qquad\qquad\qquad\qquad
-\frac{\psi(y)}{S'\big(b^*(y)\big)}
\Big(\varphi''\big(b^*(y)\big)-\frac{S''\big(b^*(y)\big)}{S'\big(b^*(y)\big)}\varphi'\big(b^*(y)\big)\Big)
 \Big]\\
&\qquad\qquad\qquad+2\frac{\varphi'(y)\psi'\big(b^*(y)\big)-\psi'(y)\varphi'\big(b^*(y)\big)}{S'\big(b^*(y)\big)}\bigg\}\\
&\qquad +\mu(y)\bigg(\frac{\varphi(y)\psi'\big(b^*(y)\big)-\psi(y)\varphi'\big(b^*(y)\big)}{S'\big(b^*(y)\big)}\bigg).
\end{align*}
Comparing with the expressions on the right-hand side of \eqref{eq:vbar-y}, \eqref{eq:vbar-xy} and \eqref{eq:vyyb} we obtain
\begin{align}\label{eq:refl}
\L v^*(x,y)=-\frac{\sigma^2(y)}{2}\left(v^*_{xx}(y,y)+2 v^*_{xy}(y,y)\right)-\mu(y)v^*_x(y,y)=0,
\end{align}
where the final equality is from \eqref{eq:smf+ref}.

We now show that $v^*$ satisfies also $(iv)$ in \eqref{eq:thPDE}.
For a fixed $x\in(0,\infty)$, $v_x^*(x,\,\cdot\,)\in C^1([x,\infty))$ and it solves (in the classical sense)
\begin{align} \label{eq:vy}
\begin{array}{lll}
& \L v_x^* (x,y)= 0 &\mbox{for $x<y<b^*(x)$} \\[+3pt]
& v^*_x(x,y) = -1 &\mbox{for $y \ge b^*(x)$} \\[+3pt]
& (\partial_y v^*_{x})(x,y)=0 &\mbox{for $y\ge b^*(x)$}.
\end{array}
\end{align}
The claim is thus trivial for $y\ge b^*(x)$. Let us consider $x\leq y<b^*(x)$. Plugging the second and third equation of \eqref{eq:vy} into the first one we get
\[
(\partial_{yy}v^*_{x})\big(x,b^*(x)-\big):=\lim_{y\uparrow b^*(x)}(\partial_{yy}v^*_{x})\big(x,y\big)=-\frac{2r}{\sigma^2\big(b^*(x)\big)}<0.
\]
Thus $(\partial_y v^*_{x})(x,\,\cdot\,)>0$ on $\big(b^*(x)-\eps,b^*(x)\big)$ for some $\eps>0$ and consequently $v^*_x(x,\,\cdot\,)$ is increasing on that neighbourhood. Then $v^*_x(x,\,\cdot\,)<-1$ in $\big(b^*(x)-\eps,b^*(x)\big)$ due to the second equation in \eqref{eq:vy}. Next, we want to show that 
\begin{align}\label{eq:vxy*}
(\partial_y v^*_{x})(x,y)> 0,\quad\text{for $y\in(x,b^*(x))$},
\end{align}
so that we can conclude that 
\begin{align}\label{eq:vx-1}
v^*_x(x,y)< -1,\quad\text{for $y\in(x,b^*(x))$}.
\end{align}
By arbitrariness of $x\in(0,\infty)$, we would then have the desired inequality $v^*_x\le -1$ in $U^\circ$.

With $\eps>0$ as above let
\[
y_0(x):=\sup\{y\in(x,b^*(x)-\eps]:(\partial_y v^*_{x})(x,y)\le 0\},\]
with $\sup\varnothing=x$.
For notational simplicity we drop the dependence on $x$ in $y_0(x)=y_0$. Arguing by contradiction, assume that $y_0>x$ so that $(\partial_y v^*_{x})(x,y_0)=0$. At the same time $v^*_x(x,y_0)\le -1$, because $(\partial_y v^*_{x})(x,\,\cdot\,)>0$ on $(y_0,b^*(x))$ by construction. Plugging the latter two expressions into the first equation of \eqref{eq:vy} gives 
\[
(\partial_{yy}v^*_{x})\big(x,y_0\big)\le-\frac{2r}{\sigma^2\big(y_0\big)}<0.
\]
That implies $(\partial_y v^*_{x})(x,\,\cdot\,)<0$ on $(y_0,y_0+\eps')$ for some $\eps'>0$, which is a contradiction with the definition of $y_0$.   

Having established that $v^*(x,\,\cdot\,)\in C^2\big([x,\infty)\big)$ we can prove also that $(v)$ in \eqref{eq:thPDE} holds. For $y>b^*(x)$, using \eqref{eq:Uc}, \eqref{eq:vb-x} and \eqref{eq:vyy} we obtain
\begin{align*}
\L v^*(x,y)
&=\frac{\sigma^2(y)}{2}v^*_{yy}\big((b^*)^{-1}(y),y\big)\!+\!\mu(y)v^*_{y}\big((b^*)^{-1}(y),y\big)\!-\!r v^*\big((b^*)^{-1}(y),y\big)\!-\!r\big((b^*)^{-1}(y)\!-\!x\big)\\
&=(\L \bar v^*)((b^*)^{-1}(y),y)\!-\!r\big((b^*)^{-1}(y)v-\!x\big),
\end{align*}
where the second equality is by continuity of $v^*_y$ and $v^*_{yy}$ at the boundary $b^*$. The same continuity and $(i)$ in \eqref{eq:thPDE} allow us to conclude 
$$
\L v^*(x,y)= -r\big((b^*)^{-1}(y)-x\big)\le 0,
$$
as needed.
\end{proof}

\begin{remark}\label{rem:fbp}
Proposition \ref{fbp} does not use any specific property of $b^*$ other than the fact that $b^*\in \B$. Therefore, all the results in that proposition continue to hold for any $v^b$ associated to $b\in\B$ (with $b^*$ replaced by $b$ everywhere).
\end{remark}

\begin{proposition} \label{prop:Vaineq}
We have $v^*= V$ on $U^\circ$. 
\end{proposition}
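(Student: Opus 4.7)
My plan is to establish the two inequalities $V(x,y)\le v^*(x,y)$ (verification) and $v^*(x,y)\le J(x,y;D^*)$ (attainment under $D^*$) separately. Both rest on applying It\^o's formula to $t\mapsto e^{-rt}v^*(X^D_t,Y_t)$; this is legitimate because of the regularity of $v^*$ established in Proposition~\ref{fbp}: $v^*_y\in C^1(U^\circ)$ yields a continuous $v^*_{yy}$ across the free boundary, while $X^D$ has bounded variation and enters only through $v^*_x$ and its jumps, so no Tanaka-type correction is needed.

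For verification, fix $D\in\A_{x,y}$ and a localising sequence $\tau_n:=\inf\{t\ge 0:Y_t\ge n\}$. Applying It\^o on $[0,\gamma\wedge\tau_n]$ and combining item $(v)$ of Proposition~\ref{fbp} with item $(iv)$ (which yields both $v^*_x\,\ud D^c\le -\ud D^c$ and, via the jump identity $v^*(X^D_s,Y_s)-v^*(X^D_{s-},Y_s)=\int_{X^D_{s-}}^{X^D_s}v^*_x(\xi,Y_s)\,\ud\xi$, the inequality $\Delta v^*\le -\Delta D$), one obtains after a further localisation of the stochastic integral and taking expectations
\[
v^*(x,y)\ge \E_{x,y}\Big[\int_{[0,\gamma\wedge\tau_n]}e^{-rs}\ud D_s\Big]+\E_{x,y}\big[e^{-r(\gamma\wedge\tau_n)}v^*(X^D_{\gamma\wedge\tau_n},Y_{\gamma\wedge\tau_n})\big].
\]
Since $v^*\ge 0$ on $U^\circ$ (combine $v^*_x\le -1$ with $v^*(y,y)=0$ to get $v^*(x,y)\ge y-x\ge 0$), the boundary term may be dropped and monotone convergence in $n$ yields $v^*(x,y)\ge J(x,y;D)$, hence $v^*\ge V$ by arbitrariness of $D$.

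For attainment I would repeat the computation along $D^*$. By Lemma~\ref{lem:SK} the process $(X^*,Y)$ is confined to $\{x\le y\le b^*(x)\}$ where $\L v^*\equiv 0$ by item $(i)$ of Proposition~\ref{fbp}; the continuous part of $D^*$ is supported on the boundary $\{Y=b^*(X^*)\}$ where $v^*_x=-1$ by item $(iii)$; and the only atom of $D^*$ (at $t=0$, if $y>b^*(x)$) moves $X^*$ from $x$ to $(b^*)^{-1}(y)$ across the region $\{\xi:y\ge b^*(\xi)\}$ on which $v^*_x\equiv -1$, so the jump contribution equals exactly $-\Delta D^*_0$. Consequently every inequality becomes an equality and
\[
v^*(x,y)=\E_{x,y}\Big[\int_{[0,\gamma^*\wedge\tau_n]}e^{-rs}\ud D^*_s\Big]+\E_{x,y}\big[e^{-r(\gamma^*\wedge\tau_n)}v^*(X^*_{\gamma^*\wedge\tau_n},Y_{\gamma^*\wedge\tau_n})\big].
\]
Monotone convergence on the first term and dominated convergence on the second (using $\P_{x,y}(\gamma^*<\infty)=1$ together with $X^*_{\gamma^*}=Y_{\gamma^*}\Rightarrow v^*(X^*_{\gamma^*},Y_{\gamma^*})=0$ from $(ii)$, with integrable envelope $\sup_t e^{-rt}v^*(x,Y_t)$ supplied by \eqref{eq:integr} and $v^*(X^*_t,Y_t)\le v^*(x,Y_t)$ from $v^*_x\le -1$) give $v^*(x,y)=J(x,y;D^*)\le V(x,y)$, closing the loop with the verification bound.

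The step most prone to difficulty is the passage to the limit in the boundary term, especially in the attainment direction, where both hypotheses in \eqref{eq:integr} are essential: a.s.\ finiteness of $\gamma^*$ identifies the correct terminal value via $(ii)$, while the integrable envelope obtained from monotonicity of $v^*$ in $x$ justifies dominated convergence. In the verification direction the nonnegativity $v^*\ge 0$ bypasses this issue altogether, which is why no a.s.\ finiteness of $\gamma$ is needed for arbitrary $D\in\A_{x,y}$. The remaining technicalities (localisation of the stochastic integral to promote the local martingale part to a true martingale, and verifying that $v^*\in C^1$ with $v^*_y\in C^1$ suffices for the It\^o expansion in the presence of jumps in $X^D$) are standard given Proposition~\ref{fbp}.
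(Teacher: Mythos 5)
Your proposal is correct and follows essentially the same route as the paper's proof: an It\^o/localisation verification argument using $v^*_x\le -1$, $\L v^*\le 0$ and $v^*\ge 0$ to get $v^*\ge V$, and then equality along the reflected control $D^*$ using $\L v^*=0$ in the confinement region, $v^*_x=-1$ on the support of $\ud D^*$ (including the initial atom), and the conditions \eqref{eq:integr} together with monotonicity of $v^*(\cdot,y)$ to pass to the limit. The only differences are cosmetic (your localising sequence and the omission of the deterministic time cap $t$ in the displayed identities, which your ``further localisation'' remark covers), so no changes are needed.
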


\begin{proof}
Fix $0<x\le y$, let $D\in\A_{x,y}$ be an arbitrary control and denote
\[
\tau_n:=\inf\{t\ge 0:\langle M^{D}\rangle_t\ge n\},\qquad n\ge 1,
\]
where
\[
M^{D}_t = \int_0^{t\wedge\gamma} e^{-rs}v^*_y(X^D_s,Y_s) \sigma(Y_s) \,\ud W_s,\qquad t\ge 0,
\]
is a local martingale. Set $\gamma_n:=\gamma\wedge\tau_n$ and apply It\^o's formula to get
\begin{align*}
 e^{- r(t\wedge\gamma_n)}v^*(X^D_{t\wedge\gamma_n}, Y_{t\wedge\gamma_n}) &= v^*(x,y)
+ \int_0^{t\wedge\gamma_n} e^{-rs}\L v^*(X^D_s,Y_s) \, \ud s +M^{D}_{t\wedge\gamma_n} \\
 &\quad+\!\int_0^{t\wedge\gamma_n}\!\! e^{- rs} v^*_x(X^D_s,Y_s) \ud D^c_s + \!\!\sum_{s <t\wedge\gamma_n}\! e^{- rs} \left(v^*(X^D_s,Y_s)-v^*(X^D_{s-},Y_{s})\right) ,
\end{align*}
where $D^c$ denotes the continuous part of $D$. Now, using that $v^*_x\le -1$ and $\L v^*\le 0$ on $U^\circ$, and 
that 
\[
v^*(X^D_s,Y_s)-v^*(X^D_{s-},Y_{s})=\int_{0}^{D_s-D_{s-}}v^*_x(X^D_{s-}+z,Y_{s})\ud z\le -(D_s-D_{s-})=-\Delta D_s,
\]
we have
\begin{align}\label{eq:verif0}
e^{- r(t\wedge\gamma_n)}v^*(X^D_{t\wedge\gamma_n},Y_{t\wedge\gamma_n}) \leq v^*(x,y) + M^{D}_{t\wedge\gamma_n}- \int_{0-}^{t\wedge\gamma_n} e^{- rs} \ud D_s. 
\end{align}

Taking expectation and using that $v^*\ge 0$ we arrive at
\[
v^*(x,y)\ge \E_{x,y}\left[\int_{0}^{t\wedge\gamma_n} e^{- rs} \ud D_s\right].
\]
Now, letting $n\to\infty$ and then $t\to\infty$ we obtain
\[
v^*(x,y)\ge \E_{x,y}\left[\int_{0}^{\gamma} e^{- rs} \ud D_s\right]
\]
by monotone convergence. Since $D\in\A_{x,y}$ was arbitrary, it follows that $v^*\geq V$.

For the other inequality, let $K_n:=[0,n]^2$ and recall $X^*=X^{b^*}$ and $\gamma^*$ from Theorem \ref{thm:mainthm}. Set 
\[
\rho_n:=\inf\{t\ge 0:(X^*_t,Y_t)\notin K_n\}\quad\text{and $\gamma^*_n:=\gamma^*\wedge\rho_n$}.
\]
Apply the same arguments as above to arrive at
\begin{align*}
 e^{- r(t\wedge\gamma^*_n)}v^*(X^*_{t\wedge\gamma^*_n}, Y_{t\wedge\gamma^*_n}) &=v^*(x,y)
 + \int_0^{t\wedge\gamma^*_n} e^{-rs}\L v^*(X^*_s,Y_s) \ud s +M^{D}_{t\wedge\gamma^*_n} \\
 &\quad+ \int_0^{t\wedge\gamma^*_n} e^{- rs} v^*_x(X^*_s,Y_s) \ud D^{*,c}_s +  \int_{0}^{D^*_0}v^*_x(x+z,y)\ud z,
\end{align*}
since $t\mapsto D^{*}_t$ is continuous for $t\in(0,\infty)$.
By construction $(X^*,Y)$ is bound to evolve in the set $\{(x,y):x\le y\le b^*(y)\}$ by Lemma \ref{lem:SK}, so we have that $\L v^*(X^*_s(\omega),Y_s(\omega))=0$, for all $s\in[0,t\wedge\gamma^*_n(\omega)]$ $\P$-a.s.\ by Proposition \ref{fbp}. Taking also expectations in the expression above and re-arranging terms we arrive at
\begin{align*}
v^*(x,y)=&\E_{x,y}\Big[e^{- r(t\wedge\gamma^*_n)}v^*(X^*_{t\wedge\gamma^*_n}, Y_{t\wedge\gamma^*_n}) +\int_0^{t\wedge\gamma^*_n} e^{- rs} \ud D^{*}_s \Big],
\end{align*}
where we used the fact that $v^*_x(X^*_s,Y_s)\ud D^{*,c}_s=v^*_x(X^*_s,b^*(X^*_s))\ud D^{*,c}_s=-\ud D^{*,c}_s$ since $s\mapsto D^*_s(\omega)$ only increases when $Y_s=b^*(X^*_s)$ by \eqref{eq:SKsol}. The function $v^*(\,\cdot\,,y)$ is decreasing thanks to $(iv)$ in \eqref{eq:thPDE}. So, thanks to the integrability condition \eqref{eq:integr}, letting $t$ and $n$ go to infinity and using that $v^*(X^*_{\gamma^*},Y_{\gamma^*})=0$, $\P_{x,y}$-a.s.\ we obtain
\[
v^*(x,y)=\E_{x,y}\Big[\int_0^{\gamma^*} e^{- rs} \ud D^{*}_s \Big]\le V(x,y).
\]
Since we already have the reverse inequality $v^*\ge V$, the claim $v^*=V$ and the optimality of 
$D^*$ follow. 
\end{proof}

\begin{remark}
By Proposition~\ref{prop:Vaineq}, the equality $V= v^*$ holds on $U^\circ$. If $v^*$ can be extended 
so that the properties specified in Proposition \ref{fbp} hold on $U=\{(x,y):0\leq x\leq y\}$, then the equality extends to $U$. This will be the case in our example with gBm in Section \ref{sec8}.
\end{remark}

\section{The maximality principle}\label{sec:max}
\label{sec4}
In this section we prove \eqref{inf} in Theorem~\ref{thm:mainthm}. In particular, solutions of \eqref{eq:ODE} are ordered, and we show that a large $b\in \B$ gives a large $v^b$.
This leads to the characterisation of $b^*$ in Theorem~\ref{thm:mainthm} as the minimal element of $\B$ and, consequently, the only one for which \eqref{eq:integr} holds (notice that $(b^*)^{-1}$ is the maximal inverse of an element of $\B$).

First note that we have existence and uniqueness up to a possible explosion of the solution to \eqref{eq:ODE} for any initial point in the interior of $U$. Indeed, since $\mu$ and $\sigma$ are locally Lipschitz continuous on $(0,\infty)$, one finds that
\begin{itemize}
\item
the map $(x,y)\mapsto F(x,y)$ is continuous for all $0\le x<y$;
\item
$F(x,\cdot\,)$ is locally Lipschitz on $(x,\infty)$.
\end{itemize}
Thus a unique solution of \eqref{eq:ODE} can be constructed by standard Picard-Lindel\"of type of arguments. Clearly this does not guarantee $\B\neq\varnothing$, for which we will show sufficient conditions in Proposition \ref{prop:monotonicity}.
Since $F$ is not defined on the diagonal, due to
$\delta(y,y)=0$, then solutions of \eqref{eq:ODE} may approach the diagonal with an infinite slope.
More precisely, let $\eta>0$, and consider an initial point $(\xi,\eta)$ with $0\le \xi<\eta$.
Then, for any $\ep\in(0,\eta-\xi)$, there exists a unique $C^1$ solution of \eqref{eq:ODE}, with $b(\xi)=\eta$, on the interval $(x_\ep,x^\eps)$, where
\begin{align}\label{eq:ye}
x_\ep:=\sup\{x\in(0,\xi):b(x)\le x+\ep\} \vee 0\quad {\text{and}}\quad
x^\eps:=\inf\{x>\xi: b(x)\le x+\eps\}.
\end{align}
Moreover, the solution can be extended continuously to $(x_0,x^0]$, with $b(x^0)=x^0$.
Furthermore, for $\eta_1\le\eta_2$, two solutions $b_1,b_2\in\B$ with initial points $(\xi,\eta_1)$ and $(\xi,\eta_2)$, respectively, satisfy $b_1\leq b_2$.

In order to properly define the minimal element of $\B$, for $\eta>\xi> 0$ let us denote by $b^{\xi,\eta}$ a solution of \eqref{eq:ODE} such that $b(\xi)=\eta$. It is convenient to introduce the set of initial values $0< \xi<\eta$ for which $b^{\xi,\eta}\in\B$. That is, for a fixed $\xi> 0$ we let
\begin{align}\label{eq:Xi}
\mathcal E_\xi:=\{\eta>\xi: b^{\xi,\eta}\in\B\}.
\end{align}
Then, for any $\eta_1<\eta_2$ in $\mathcal E_\xi$ we must have $b^{\xi,\eta_1}<b^{\xi,\eta_2}$ by uniqueness of the solution. We will say that solutions are ordered and we refer to `larger' or `smaller' solution as appropriate.


\begin{proposition}\label{prop:increasing}
Fix $\xi> 0$ and assume that $\eta_1>\eta_2$ belong to $\mathcal E_\xi$ so that $b_1:=b^{\xi,\eta_1}>b^{\xi,\eta_2}=:b_2$ satisfy $b_1,b_2\in\B$. Then, $v^{b_1}> v^{b_2}$ on $U^\circ$.
\end{proposition}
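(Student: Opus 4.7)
The plan is to prove the inequality directly from the integral representation \eqref{eq:vbbar}, exploiting the monotonicity of the integrand in the boundary. Writing
\[
\bar v^b(x,y)=\int_x^y \tilde G(y, b(z))\,\ud z,\qquad \tilde G(y,b):=\frac{\varphi(y)\psi'(b)-\psi(y)\varphi'(b)}{S'(b)},
\]
and simplifying second derivatives via $\L\varphi=\L\psi=0$ together with $S''(b)/S'(b)=-2\mu(b)/\sigma^2(b)$, a direct computation produces
\[
\partial_b \tilde G(y,b)=\frac{2r\,\delta(y,b)}{\sigma^2(b)S'(b)}.
\]
By \eqref{eq:gammaneg} this is strictly positive for $b>y$, so $\tilde G(y,\cdot)$ is strictly increasing on $[y,\infty)$; moreover the normalisation $S'(y)=\varphi(y)\psi'(y)-\psi(y)\varphi'(y)$ forces the key identity $\tilde G(y,y)=1$. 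I will also use that every $b\in\B$ is strictly increasing on $(0,\infty)$ since $b'(x)=F(x,b(x))>0$ by \eqref{eq:Bcirc}, whence $b_1>b_2$ implies $b_1^{-1}<b_2^{-1}$ wherever both are defined.

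Assuming $x<y$ throughout (on the diagonal both functions vanish), I compare $v^{b_1}$ and $v^{b_2}$ via three cases. If $y\le b_2(x)$ then $v^{b_i}=\bar v^{b_i}$, and monotonicity of $b_2$ together with $b_2(x)\ge y$ forces $b_i(z)\ge y$ for $z\in[x,y]$; strict monotonicity of $\tilde G(y,\cdot)$ combined with $b_1>b_2$ gives $\bar v^{b_1}(x,y)>\bar v^{b_2}(x,y)$ upon integration. If $b_2(x)<y\le b_1(x)$, set $x_*:=b_2^{-1}(y)\in(x,y)$; by \eqref{eq:Uc}, $v^{b_1}(x,y)=\bar v^{b_1}(x,y)$ and $v^{b_2}(x,y)=\bar v^{b_2}(x_*,y)+(x_*-x)$, and splitting the $\bar v^{b_1}$ integral at $x_*$ and writing $x_*-x=\int_x^{x_*}1\,\ud z$ rearranges the difference into
\[
v^{b_1}(x,y)-v^{b_2}(x,y)=\int_x^{x_*}\!\bigl[\tilde G(y,b_1(z))-1\bigr]\ud z+\int_{x_*}^y\!\bigl[\tilde G(y,b_1(z))-\tilde G(y,b_2(z))\bigr]\ud z,
\]
in which the first integrand is non-negative because $b_1(z)\ge b_1(x)\ge y$ forces $\tilde G(y,b_1(z))\ge \tilde G(y,y)=1$, while the second is strictly positive on $(x_*,y)$ by the first-case reasoning (since $b_2(z)>b_2(x_*)=y$ for $z>x_*$). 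Finally, if $y>b_1(x)$, setting $x_i:=b_i^{-1}(y)$ with $x<x_1<x_2<y$, an entirely analogous manipulation produces
\[
v^{b_1}(x,y)-v^{b_2}(x,y)=\int_{x_1}^{x_2}\!\bigl[\tilde G(y,b_1(z))-1\bigr]\ud z+\int_{x_2}^y\!\bigl[\tilde G(y,b_1(z))-\tilde G(y,b_2(z))\bigr]\ud z,
\]
and strict positivity follows by the same sign analysis.

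The main obstacle is the clean identification of $\partial_b\tilde G$: without invoking $\L\varphi=\L\psi=0$ and the expression for $S''/S'$ the derivative is opaque, and isolating the factor $\delta(y,b)$ via the resulting cancellation is the essential step for pinpointing the sign. Once that and the boundary value $\tilde G(y,y)=1$ are secured, the rest is careful bookkeeping across the three cases. I remark that the usual It\^o-based verification applied to $v^{b_2}$ along $(X^{b_1},Y)$ does \emph{not} yield the result without the integrability in \eqref{eq:integr} (which is not in force here): the terminal contribution $\E_{x,y}[e^{-rt}v^{b_2}(X^{b_1}_t,Y_t)1_{\{\gamma^{b_1}>t\}}]$ need not vanish as $t\to\infty$ and in fact compensates in precisely the direction that would flip the sought inequality. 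The analytic route above circumvents this difficulty entirely.
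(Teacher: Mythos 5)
Your proof is correct, and it takes a genuinely different route from the paper's. The paper works with the derivative $u=-v^b_x$: for fixed $x$ it treats $y\mapsto v^{b_i}_x(x,y)$ as solutions of the boundary-value problem \eqref{eq:vy2}, runs a maximum-principle-type contradiction argument (via the point $y_0$) to obtain $\partial_y\big(v^{b_1}_x-v^{b_2}_x\big)>0$, hence $v^{b_1}_x\le v^{b_2}_x$ with strict inequality below $b_2$, and then integrates in $x$. You instead read the same comparison directly off the closed-form representation \eqref{eq:vbbar}: your identity $\partial_b\tilde G(y,b)=2r\,\delta(y,b)/\big(\sigma^2(b)S'(b)\big)$ checks out (it uses $\L\varphi=\L\psi=0$ and $S''/S'=-2\mu/\sigma^2$, paralleling the cancellation behind \eqref{eq:vbar-yy}), and together with $\tilde G(y,y)=1$ (the normalisation $S'=\varphi\psi'-\psi\varphi'$) and the strict monotonicity of every $b\in\B$ (since $b'=F(\cdot,b(\cdot))>0$), your three-region decompositions are exactly the correct rearrangements of \eqref{eq:Uc}, and the sign analysis in each case is sound. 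What your route buys is self-containedness and transparency: it needs neither the $C^1$ regularity nor the ODE comparison machinery of Proposition \ref{fbp}/Remark \ref{rem:fbp}, and it exposes the mechanism (monotonicity of the integrand in the barrier). What the paper's route buys is alignment with the rest of the argument, since the facts about $u=-v_x$ are precisely the objects reused for the optimal-stopping connection in Section \ref{sec5}. One shared caveat: on the diagonal both $v^{b_1}$ and $v^{b_2}$ vanish, so the strict inequality really holds off the diagonal ($x<y$); you note this explicitly, and the paper's own proof establishes exactly the same statement. Your closing remark about the It\^o/verification route failing without \eqref{eq:integr} is a reasonable aside and does not affect the argument.
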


\begin{proof} 
Let us start by recalling that all the results in Proposition \ref{fbp} also hold for $v^{b_1}$ and $v^{b_2}$ provided that we replace $b^*$ therein by $b_1$ and $b_2$, respectively (see Remark \ref{rem:fbp}). In particular, $v^{b_i}\in C^1(U^\circ)$ and $v_x^{b_i}\leq -1$ by \eqref{eq:thPDE}. Moreover, arguing as in the proof of $(iv)$ of Eq.\ \eqref{eq:thPDE} we can analogously show that 
\begin{align}\label{eq:comp0}
\partial_y v^{b_i}_x(x,y) >0,\qquad\text{ for $x<y<b_i(x)$ and $i=1,2$}.
\end{align}
That implies, in particular, 
\[
v^{b_i}_x(x,y)<-1,\qquad\text{for $b^{-1}_i(y)<x<y$ and $i=1,2$}, 
\]
which will be used in the next part of the proof.

Next we prove that $v^{b_1}> v^{b_2}$. To simplify notation we set $v^i=v^{b_i}$, for $i=1,2$. 
It is sufficient to prove 
\begin{align}\label{eq:vxineq}
v^1_x\le v^2_x
\end{align} 
with {\em strict inequality at the diagonal}, so that for each $y>0$ and all $0\le x<y$ we have
\[
v^1(x,y)=-\int_x^y v^1_x(z,y)\ud z>-\int_y^x v^2_x(z,y)\ud z=v^2(x,y),
\] 
where we also use that $v^1(y,y)=v^2(y,y)=0$ by \eqref{eq:Uc}. As in \eqref{eq:vy}, for a fixed $x\in(0,\infty)$ and for $i=1,2$, we have by construction that $v_x^{i}(x,\,\cdot\,)\in C^1([x,\infty))$ and it solves
\begin{align} \label{eq:vy2}
\begin{array}{lll}
& \L v_x^{i} (x,y)= 0 &\mbox{for $x<y<b_i(x)$} \\[+3pt]
& v^{i}_x(x,y) = -1 &\mbox{for $y \ge b_i(x)$} \\[+3pt]
& (\partial_y v^{i}_{x})(x,y)=0 &\mbox{for all $y\ge b_i(x)$}.
\end{array}
\end{align}

Since $b_1> b_2$, then we have $b^{-1}_1(y)=b^{-1}_2(y)=0$ for $0<y\le b_2(0)$ and $b_1^{-1}(y)< b_2^{-1}(y)$ for $y>b_2(0)$ (we adopt the convention that $(0,b_2(0)]=\varnothing$ if $b_2(0)=0$). Now, for $0< x\le b^{-1}_1(y)$ we have $v^1_x(x,y)=-1=v^2_x(x,y)$, so \eqref{eq:vxineq} holds with equality in that set. Instead, we have 
\begin{align}\label{eq:comp1}
v^1_x(x,y)< -1=v^2_x(x,y),\qquad\text{for $b_1^{-1}(y)<x\le b^{-1}_2(y)$},
\end{align}
and \eqref{eq:vxineq} holds with strict inequality.

It remains to prove that such strict inequality also holds on $b^{-1}_2(y)< x\le y$. This is equivalent to proving it on $x < y\le b_2(x)$ for each $x> 0$ given and fixed.
Set $u(x,y):=(v^1_x-v^2_x)(x,y)$. Then $u(x,b_2(x))< 0$ by \eqref{eq:comp1} evaluated at the boundary $y=b_2(x)$. Moreover $(\partial_y u)(x,b_2(x))>0$ by the third equation in \eqref{eq:vy2} applied to $v^2_x$, and \eqref{eq:comp0} applied to $v^1_x$ (along with the fact that $b_1>b_2$). Thus, there is $\eps>0$ such that $(\partial_y u)(x,\,\cdot\,)>0$ on $(b_2(x)-\eps,b_2(x))$ and, setting 
\[
y_0:=\sup\{y\in(x,b_2(x)-\eps): (\partial_y u)(x,y)\le 0\},
\]
we must have $y_0=x$. Indeed $\L u(x,y)=0$ for $y\in (x,b_2(x))$. In particular, at $y_0$ it holds $\partial_{yy}u(x,y_0)=2r\sigma^{-2}(y_0)u(x,y_0)<0$, where the strict inequality is by $u(x,y_0)\le u(x,b_2(x))<0$ which holds because $\partial_y u(x,\,\cdot\,)>0$ on $(y_0,b_2(x))$. This leads to a contradiction, since $y_0$ should be a minimum of $u(x,\,\cdot\,)$. Hence, $u(x,\,\cdot\,)< 0$ on $(x,b_2(x))$, as needed.  
\end{proof}

\begin{corollary}[{\bf Uniqueness and minimality}]\label{rem:unique}
There can be at most one $b^* \in \B$ that satisfies the integrability conditions \eqref{eq:integr} in Theorem~\ref{thm:mainthm}. Moreover, it must be the minimal element in $\B$ and \eqref{inf} holds.
\end{corollary}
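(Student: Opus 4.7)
The plan is to combine the strict monotonicity of $v^b$ in $b$ from Proposition \ref{prop:increasing} with the one-sided bound $v^b\ge V$ on $U^\circ$ that holds for \emph{every} $b\in\B$. This latter bound is not stated as a separate lemma but is exactly what the first half of the proof of Proposition \ref{prop:Vaineq} yields: the verification argument there uses only items $(iv)$ and $(v)$ of the variational system \eqref{eq:thPDE} together with $v^b\ge 0$, and by Remark \ref{rem:fbp} all these properties hold with $b^*$ replaced by an arbitrary $b\in\B$. A second ingredient I would record first is the following ODE-theoretic fact: two distinct elements of $\B$ never meet, because if $b_1(x_0)=b_2(x_0)$ at some $x_0>0$, then the Picard--Lindel\"of uniqueness recalled at the beginning of Section~\ref{sec:max} forces $b_1\equiv b_2$ on their common domain. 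Consequently distinct elements of $\B$ are strictly ordered throughout $(0,\infty)$.

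For the uniqueness claim, I would suppose that $b_1,b_2\in\B$ both satisfy \eqref{eq:integr} and apply Proposition \ref{prop:Vaineq} to each, obtaining $v^{b_1}=V=v^{b_2}$ on $U^\circ$. If $b_1\ne b_2$, the observation above gives, after relabelling, $b_1>b_2$ pointwise on $(0,\infty)$. Picking any $\xi>0$ and setting $\eta_i:=b_i(\xi)\in\mathcal E_\xi$, I would be in the hypotheses of Proposition \ref{prop:increasing} with $\eta_1>\eta_2$, which delivers the strict inequality $v^{b_1}>v^{b_2}$ on $U^\circ$, contradicting $v^{b_1}=v^{b_2}$. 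Hence $b_1=b_2$.

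For minimality and the identity \eqref{inf}, I would fix the unique $b^*\in\B$ satisfying \eqref{eq:integr} and take an arbitrary $\tilde b\in\B$. If $\tilde b\ne b^*$, the ODE-uniqueness observation forces either $\tilde b>b^*$ or $\tilde b<b^*$ on the whole of $(0,\infty)$. The case $\tilde b<b^*$ is excluded by applying Proposition \ref{prop:increasing} (with $b^*$ playing the role of the larger boundary): it would give $v^{\tilde b}<v^{b^*}=V$ on $U^\circ$, contradicting the universal bound $v^{\tilde b}\ge V$ recorded above. Therefore $\tilde b\ge b^*$ pointwise, with equality precisely when $\tilde b=b^*$, yielding $b^*(y)=\inf_{b\in\B}b(y)$ for every $y>0$, which is \eqref{inf}.

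The only mildly delicate step is the first one: making explicit that the verification chain in Proposition \ref{prop:Vaineq} producing $v^b\ge V$ depends on $b$ only through the properties granted uniformly to all elements of $\B$ by Remark \ref{rem:fbp}, and not through features specific to $b^*$. Once that is in place, the argument is essentially bookkeeping between Proposition \ref{prop:increasing} and the local Lipschitz character of $F(x,\cdot)$ away from the diagonal already noted at the start of Section \ref{sec:max}.
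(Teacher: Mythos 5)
Your proof is correct, and its uniqueness half coincides with the paper's argument: two boundaries in $\B$ satisfying \eqref{eq:integr} would be strictly ordered by ODE uniqueness, both would yield $v^{b_1}=V=v^{b_2}$ via Proposition \ref{prop:Vaineq}, contradicting the strict comparison of Proposition \ref{prop:increasing}. For minimality, however, you take a genuinely different route. The paper argues that a hypothetical $b\in\B$ with $b<b^*$ would itself inherit the conditions \eqref{eq:integr}: the inequality $v^{b}<v^{b^*}$ from Proposition \ref{prop:increasing} gives the second condition, while $D^{*}\le D^{b}$ and hence $\gamma^{b}\le\gamma^{*}$ gives the first; then the verification applied to $b$ forces $v^{b}=V=v^{b^*}$, contradicting strict monotonicity. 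You instead isolate from the first half of the proof of Proposition \ref{prop:Vaineq} the one-sided bound $v^{b}\ge V$ valid for \emph{every} $b\in\B$, and contradict $v^{\tilde b}<v^{b^*}=V$ directly. That extraction is legitimate: the chain producing $v^*\ge V$ uses only the regularity of $v^b$, items $(iv)$ and $(v)$ of \eqref{eq:thPDE}, and $v^b\ge 0$ (which follows from $(ii)$ and $(iv)$), all of which Remark \ref{rem:fbp} grants uniformly on $\B$, and none of which require \eqref{eq:integr} (those conditions enter only in the reverse inequality $v^*\le V$). What each approach buys: your argument dispenses with the comparison of controls and absorption times ($D^{*}\le D^{\tilde b}$, $\gamma^{\tilde b}\le\gamma^{*}$) and with transferring the sup-integrability to the smaller boundary, at the cost of reopening the proof of Proposition \ref{prop:Vaineq} to state the universal bound explicitly; the paper's argument keeps Theorem \ref{thm:mainthm} as a black box but needs that transfer-of-integrability step. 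Both are sound, and your bookkeeping of the ordering of distinct elements of $\B$ via Picard--Lindel\"of uniqueness matches what the paper uses implicitly.
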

\begin{proof}
Assume by way of contradiction that there are $b$ and $\tilde b$ in $\B$ that satisfy the integrability conditions \eqref{eq:integr}. Then, with no loss of generality $b<\tilde b$ but $V=v^b=v^{\tilde b}$, which contradicts Proposition \ref{prop:increasing}. As for the minimality of $b^*$, assume that there is $b\in\B$ with $b<b^*$ and $b^*$ satisfies the integrability conditions in \eqref{eq:integr}. Then, by Proposition \ref{prop:increasing}, $v^b<v^{b^*}$ so that $v^b$ satisfies the second condition in \eqref{eq:integr}. Moreover, by construction $D^*_t\le D^b_t$ for all $t\ge 0$, $\P_{x,y}$-a.s. Hence, $\gamma^b\le \gamma^*$, $\P_{x,y}$-a.s.\ and also the first condition in \eqref{eq:integr} holds for $b$. Thus, we reach again a contradiction as $b\in\B$ would satisfy \eqref{eq:integr} and it would be $v^b=V=v^*$.
\end{proof}

We now address the question of whether $\B$ is non-empty. We first have the following lemma.

\begin{lemma}\label{lem}
For any sequence $(x_n,y_n,)_{n\ge 1}$ with $y_n>x_n$ such that $\lim_{n\to\infty}y_n=\lim_{n\to\infty}x_n=p>0$ as $n\to\infty$ we have $F(x_n,y_n)\to-\infty$ as $n\to\infty$.
\end{lemma}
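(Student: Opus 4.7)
The plan is to Taylor-expand the ingredients of $F$ near the diagonal point $(p,p)$ and exploit the ODEs $\L\varphi=\L\psi=0$. Write $h_n:=y_n-x_n>0$, so that $h_n\downarrow 0$, and denote the two Wronskian-like quantities entering the bracket in \eqref{eq:F} by
\[
A(x,y):=\varphi'(y)\psi'(x)-\varphi'(x)\psi'(y),\qquad B(x,y):=\varphi'(y)\psi(x)-\varphi(x)\psi'(y),
\]
so that $F(x,y)=\frac{\sigma^2(y)}{r\,\delta(x,y)}\bigl[A(x,y)+\tfrac{\mu(x)}{\sigma^2(x)}B(x,y)\bigr]$.

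First I would expand the denominator: Taylor-expanding $y\mapsto\delta(x,y)$ around $y=x$ uses $\delta(x,x)=0$ and $\partial_y\delta(x,x)=\varphi(x)\psi'(x)-\varphi'(x)\psi(x)=S'(x)$, giving $\delta(x_n,y_n)=h_nS'(x_n)+O(h_n^2)$, and in particular $\delta(x_n,y_n)/h_n\to S'(p)>0$. For the numerator I would observe, on the one hand, that $B(x,x)=\varphi'(x)\psi(x)-\varphi(x)\psi'(x)=-S'(x)<0$, so by continuity $B(x_n,y_n)\to -S'(p)<0$; and on the other hand, that $A(x,x)=0$, so Taylor expansion in the $y$-variable yields $A(x_n,y_n)=h_n[\varphi''(x_n)\psi'(x_n)-\varphi'(x_n)\psi''(x_n)]+O(h_n^2)$.

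The key algebraic step is to identify the coefficient of $h_n$ in $A$. Using $\L\varphi=\L\psi=0$ to substitute $\varphi''=\frac{2}{\sigma^2}(r\varphi-\mu\varphi')$ and $\psi''=\frac{2}{\sigma^2}(r\psi-\mu\psi')$, the $\mu$-contributions cancel and one obtains
\[
\varphi''(x)\psi'(x)-\varphi'(x)\psi''(x)=\frac{2r}{\sigma^2(x)}\bigl(\varphi(x)\psi'(x)-\varphi'(x)\psi(x)\bigr)=\frac{2rS'(x)}{\sigma^2(x)},
\]
so that $A(x_n,y_n)=h_n\cdot 2rS'(x_n)/\sigma^2(x_n)+O(h_n^2)$. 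Plugging all three expansions into $F$ gives, after straightforward simplification,
\[
F(x_n,y_n)=\frac{\sigma^2(y_n)}{\sigma^2(x_n)}\Bigl[2-\frac{\mu(x_n)}{r\,h_n}\Bigr]\bigl(1+o(1)\bigr),
\]
where the $2$ arises from $A/\delta\to 2r/\sigma^2(p)$ and the blow-up term comes from the nonvanishing limit $B(x_n,y_n)\to -S'(p)$ divided by $\delta(x_n,y_n)\sim h_nS'(p)$. Since $\sigma^2$ is continuous and strictly positive at $p>0$ and $\mu(p)>0$, the second contribution dominates and tends to $-\infty$, proving $F(x_n,y_n)\to -\infty$.

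The main obstacle is the algebraic identification of the first-order expansion of $A$: \emph{a priori} both $A$ and $\delta$ vanish linearly in $h_n$, so one might fear that $A/\delta$ is indeterminate or even blows up with uncontrolled sign. Only after invoking the ODEs $\L\varphi=\L\psi=0$ to eliminate the second derivatives and to cancel the $\mu$-contributions does $A/\delta$ reveal itself to have a finite positive limit, so that the sign and divergence of $F$ are controlled entirely by the $B$-term, whose diagonal value $-S'(p)$ is strictly negative.
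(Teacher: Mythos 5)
Your argument is correct and is essentially the paper's own proof: you expand $\delta$ and the first Wronskian-type term to first order in $y_n-x_n$, use $\L\varphi=\L\psi=0$ to show that this first contribution has a finite limit, and obtain the divergence to $-\infty$ from the second bracketed term, whose diagonal value $-S'(p)$ is strictly negative, divided by $\delta(x_n,y_n)\downarrow 0^+$. Two minor remarks: your signs are the internally consistent ones (the bounded contribution tends to $+2$, whereas the paper's \eqref{eq:asymp} states $-2$ owing to a sign slip in its expansion of $\delta$, whose first-order coefficient should be $+S'(x_n)$), and, exactly like the paper's proof, your blow-up argument uses $\mu(p)>0$ --- a condition the standing assumption $\mu\ge 0$ does not formally guarantee, but on which both proofs rely and which holds in the geometric Brownian motion application.
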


\begin{proof} \label{le:2}
First observe that as $n\to\infty$, letting $\eps_n:=y_n-x_n$ we have the asymptotic expansion
\begin{align*}
\delta(x_n,y_n)=\psi(x_n)\varphi(x_n)\left(\frac{\varphi'(x_n)}{\varphi(x_n)}-\frac{\psi'(x_n)}{\psi(x_n)}\right)\eps_n+o(\eps_n).
\end{align*}
Likewise, for the first term in the square brackets in the definition of $F$ we have (see \eqref{eq:F})
\begin{align*}
\varphi'(x_n)\psi'(x_n)\left(\frac{\varphi'(y_n)}{\varphi'(x_n)}-\frac{\psi'(y_n)}{\psi'(x_n)}\right)
&=\varphi'(x_n)\psi'(x_n)\left(\frac{\varphi''(x_n)}{\varphi'(x_n)}-\frac{\psi''(x_n)}{\psi'(x_n)}\right)\eps_n+o(\eps_n)\\
&=\varphi'(x_n)\psi'(x_n)\frac{2r}{\sigma^2(x_n)}\left(\frac{\varphi(x_n)}{\varphi'(x_n)}-\frac{\psi(x_n)}{\psi'(x_n)}\right)\eps_n+o(\eps_n),
\end{align*}
where the final expression follows from the fact that both $\psi$ and $\varphi$ solve the ODE $\L f=0$.
So for the first term in \eqref{eq:F} we have
\begin{align}\label{eq:asymp}
\lim_{n\to\infty}\frac{\sigma^2(y_n)}{r\delta(x_n,y_n)}\Big(\varphi'(y_n)\psi'(x_n)-\varphi'(x_n)\psi'(y_n)\Big)=-2.
\end{align}

For the second term inside the square brackets in \eqref{eq:F} we have
\[
\lim_{n\to\infty}\Big(\varphi'(y_n) \psi(x_n)-\varphi(x_n)\psi'(y_n)\Big)= \Big(\varphi'(p) \psi(p)-\varphi(p)\psi'(p)\Big)<0,
\]
so the desired result follows easily since $\delta(x_n,y_n)\downarrow 0$ as $n\to\infty$ and $\sigma^2(p)>0$. 
\end{proof}

For $x>0$ let 
\[d(x)=\inf\{y>x:F(x,y)>0\},\]
and note that $d(x)>x$ by Lemma~\ref{lem}. 
For $\xi>0$, denote by $b^{\xi,d(\xi)}$ the solution of \eqref{eq:ODE} with initial point $b^{\xi,d(\xi)}(\xi)=d(\xi)$. Let us also recall that $x^0=x^0(\xi,d(\xi))$ is the smallest $x>\xi$ for which $b^{\xi,d(\xi)}$ touches the diagonal (see \eqref{eq:ye}). We can now provide easy sufficient conditions under which $\B\neq\varnothing$, its minimal element $b^*$ exists and the map $x\mapsto d(x)$ is strictly increasing.

\begin{proposition}\label{prop:monotonicity}
Assume that $\mu,\sigma\in C^1([0,\infty))$, $d(x)<\infty$ for all $x> 0$, and 
\begin{align}\label{eq:zeta}
\zeta(x):=\frac{2r}{\sigma^2(x)}+\left(\frac{\mu}{\sigma^2}\right)'(x)+\frac{\mu^2(x)}{\sigma^4(x)} >0,\qquad x> 0.
\end{align}
Then, $x\mapsto d(x)$ is strictly increasing and $x\mapsto b^{\xi,d(\xi)}(x)$ is increasing on $(0,\xi)$ and decreasing on $(\xi,x^0)$. Moreover,  
\[
\bar b(x)=\sup_{\xi>0}b^{\xi,d(\xi)}(x)
\] 
is the minimal element in $\B$. 
\end{proposition}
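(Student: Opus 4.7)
The plan is to proceed in three stages: establish monotonicity of the nullcline $\{F=0\}$ as a strictly increasing $C^1$ graph $y=d(x)$; analyze the tent-like structure of the curves $b^{\xi,d(\xi)}$; and identify $\bar b$ as the minimal element of $\B$ via an ordering argument.

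\emph{Nullcline monotonicity.} By Lemma~\ref{lem} and continuity of $F$, one has $F(x,d(x))=0$, $F(x,\cdot)<0$ on $(x,d(x))$, and (generically) $F_y(x,d(x))>0$, the latter being forced by the fact that $F(x,\cdot)$ changes sign at $d(x)$. Implicit differentiation then gives $d'(x)=-F_x(x,d(x))/F_y(x,d(x))$, reducing the question to the sign of $F_x$ on the nullcline. Differentiating \eqref{eq:F} and repeatedly applying $\L\psi=\L\varphi=0$ together with the identity $S''/S'=-2\mu/\sigma^2$, I expect the $\psi,\varphi$-dependent terms to consolidate so that the sign of $d'(x)$ is controlled, up to manifestly positive factors, by $\zeta(x)$; under $\zeta(x)>0$ this yields $d'(x)>0$. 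This explicit algebraic manipulation is the main obstacle of the proof.

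\emph{Tent structure.} Since $b^{\xi,d(\xi)}(\xi)=d(\xi)$ lies on the nullcline, the ODE gives $(b^{\xi,d(\xi)})'(\xi)=F(\xi,d(\xi))=0$, so $\xi$ is a critical point. A Taylor comparison of $b^{\xi,d(\xi)}$ and $d$ at $\xi$, using $d'(\xi)>0$, shows $b^{\xi,d(\xi)}>d$ just to the left of $\xi$ and $b^{\xi,d(\xi)}<d$ just to the right. Since $F>0$ immediately above $d$ (in the relevant strip) and $F<0$ between $d$ and the diagonal, the ODE then gives $(b^{\xi,d(\xi)})'>0$ on $(0,\xi)$ and $<0$ on $(\xi,x^0)$, provided $b^{\xi,d(\xi)}$ does not re-cross $d$. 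A potential second tangent contact at some $x^\ast$ would, by the same analysis combined with $d'(x^\ast)>0$, force $b^{\xi,d(\xi)}$ below $d$ on a right neighborhood of $x^\ast$, contradicting the side on which it must lie to join the curve continuously back to $\xi$.

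\emph{Minimality of $\bar b$.} Any $b\in\B$ satisfies $F(x,b(x))>0$, which places $b(x)$ in the positivity set of $F(x,\cdot)$; since that set is contained in $(d(x),\infty)$, we obtain $b>d$ everywhere on $(0,\infty)$. In particular $b(\xi)>d(\xi)=b^{\xi,d(\xi)}(\xi)$ for every $\xi>0$, and the non-crossing of solutions of \eqref{eq:ODE} recalled before the proposition gives $b\ge b^{\xi,d(\xi)}$ on the common domain; taking the supremum over $\xi$ yields $b\ge\bar b$ pointwise, so $\bar b$ is a lower bound for $\B$. To conclude $\bar b\in\B$, a continuous-dependence argument along a maximizing sequence of initial data $(\xi_n,d(\xi_n))$ shows that $\bar b$ solves \eqref{eq:ODE}; it does not explode, being dominated by any fixed element of $\B$ (nonemptiness being a mild consequence of the standing hypotheses together with the tent analysis above); and $\bar b>d$ strictly, for otherwise $\bar b$ would itself be tent-shaped at a contact point $x_0$ and dip below $d$ immediately to the right, contradicting $\bar b\ge b^{\xi,d(\xi)}$ for $\xi$ slightly larger than $x_0$. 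Hence $\bar b\in\B$ is the minimal element.
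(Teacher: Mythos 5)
Your outline mirrors the paper's strategy (sign of $F_x$ on the zero set of $F$, tent shape of $b^{\xi,d(\xi)}$, ordering and a supremum construction), but the proposal leaves out precisely the step that carries all the analytic content. You defer as ``the main obstacle'' the computation showing that the hypothesis \eqref{eq:zeta} controls the sign of $F_x$; without it nothing is proved, since $\zeta>0$ enters the argument nowhere else. The paper carries this out: if $F(\nu,y)=0$ with $y>\nu$, then \eqref{eq:F} gives the identity $\varphi'(y)\psi'(\nu)-\psi'(y)\varphi'(\nu)=-\tfrac{\mu(\nu)}{\sigma^2(\nu)}\bigl(\varphi'(y)\psi(\nu)-\psi'(y)\varphi(\nu)\bigr)$; differentiating \eqref{eq:F} in $x$, replacing $\psi''$ and $\varphi''$ via $\L\psi=\L\varphi=0$, and inserting this identity yields
\begin{equation*}
F_x(\nu,y)=\frac{\sigma^2(y)}{r\,\delta(\nu,y)}\,\zeta(\nu)\,\bigl[\varphi'(y)\psi(\nu)-\psi'(y)\varphi(\nu)\bigr],
\end{equation*}
where the bracket is \emph{strictly negative} (so the sign is not ``controlled by $\zeta$ up to manifestly positive factors''; this sign must be tracked), giving $F_x<0$ at every zero of $F$ above the diagonal when $\zeta>0$. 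Note also that the definition of $d$ only yields $F_y(x,d(x))\ge 0$, not the strict inequality you invoke ``generically'', so the implicit-function formula $d'=-F_x/F_y$ is not justified; strict monotonicity of $d$ is instead deduced directly from $F(x,d(x))=0$, $F_x(x,d(x))<0$ and $F_y(x,d(x))\ge 0$.

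A second genuine gap is in the tent-structure step: your phase-plane reasoning assumes that $F<0$ strictly between the diagonal and the graph of $d$ and $F>0$ everywhere above it. The definition $d(x)=\inf\{y>x:F(x,y)>0\}$ gives only $F(x,\cdot)\le 0$ on $(x,d(x))$ and positivity at points arbitrarily close above $d(x)$; it does not exclude further zeros of $F(x,\cdot)$ above $d(x)$, where the solution $b^{\xi,d(\xi)}$ could stall, and your tangency discussion presupposes the same unproved sign structure. The paper avoids any global claim about the sign of $F$: differentiating \eqref{eq:ODE}, at any stationary point $\nu$ of $b=b^{\xi,d(\xi)}$ one has $b''(\nu)=F_x(\nu,b(\nu))<0$ by the displayed computation (valid at \emph{every} zero of $F$, not only on the graph of $d$), so every critical point is a strict local maximum, hence there is at most one on $(0,x^0]$ and it must be $\xi$; the monotonicity on $(0,\xi)$ and $(\xi,x^0)$ and the comparisons $b>d$ and $b<d$ on the two sides then follow. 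Finally, in the minimality step your bound on $\bar b$ by ``any fixed element of $\B$'', with nonemptiness of $\B$ taken as a ``mild consequence'', is circular as stated: that $\B\neq\varnothing$ is exactly what the construction of $\bar b$ must deliver (the paper gets $\bar b\in\B$ from monotonicity in $\xi$ of the family $b^{\xi,d(\xi)}$, uniqueness for \eqref{eq:ODE}, and $\bar b>d$, which forces $F(x,\bar b(x))>0$). The ordering argument itself ($b\in\B\Rightarrow b>d\Rightarrow b\ge b^{\xi,d(\xi)}\Rightarrow b\ge\bar b$) does match the paper and is fine.
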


\begin{proof}
Let us fix $\xi>0$ and simplify the notation for this initial part of the proof by setting $b(x)=b^{\xi,d(\xi)}(x)$. First we show that $\xi$ is the unique stationary point of $b$ on $(0,x^0]$ and that it is a global maximum.

Assume there exists $\nu> 0$ such that $F(\nu,b(\nu))=0$ so that $b'(\nu)=0$ (a priori $\nu\neq \xi$ and potentially $b(\nu)\neq d(\nu)$). Then $b(\nu)>\nu$ by  Lemma \ref{lem} and, by \eqref{eq:F}, we have 
\begin{align}\label{eq:F=0}
\varphi'\big(b(\nu)\big)\psi'(\nu)-\psi'\big(b(\nu)\big)\varphi'(\nu)=-\frac{\mu(\nu)}{\sigma^2(\nu)}\left(\varphi'\big(b(\nu)\big)\psi(\nu)-\psi'\big(b(\nu)\big)\varphi(\nu)\right).
\end{align}
Differentiating \eqref{eq:ODE} we obtain
\[
b''(x)=F_x\big(x,b(x)\big)+F_y\big(x,b(x)\big)b'(x)
\]
so that
\begin{align}\label{eq:b''}
b''(\nu)=F_x\big(\nu,b(\nu)\big).
\end{align}
Recalling the expression for $F$ in \eqref{eq:F} and performing straightforward calculations we obtain
\begin{align*}
F_x(x,y)
&=-\frac{\delta_x(x,y)}{\delta(x,y)}F(x,y)\\
&\quad+\frac{\sigma^2(y)}{r\delta(x,y)}\bigg[\varphi'(y)\psi''(x)-\varphi''(x)\psi'(y)+(\mu/\sigma^2)'(x)\big(\varphi'(y)\psi(x)-\varphi(x)\psi'(y)\big)\\
&\qquad\qquad\qquad+(\mu/\sigma^2)(x)\big(\varphi'(y)\psi'(x)-\varphi'(x)\psi'(y)\big)\bigg].
\end{align*}
Evaluating the above at $(x,y)=(\nu,b(\nu))$ we see that the first term vanishes. We can evaluate the second term substituting $\psi''$ and $\varphi''$ therein with
\[
\psi''=(2r/\sigma^2)\psi-(2\mu/\sigma^2)\psi'\quad\text{and}\quad\varphi''=(2r/\sigma^2)\varphi-(2\mu/\sigma^2)\varphi',
\]
which are due to $\L \psi=\L \varphi =0$.
Rearranging terms and using also \eqref{eq:F=0} we thus obtain
\begin{align*}
b''(\nu)=\frac{\sigma^2\big(b(\nu)\big)}{r\delta\big(\nu,b(\nu)\big)}\zeta(\nu)\left[\varphi'\big(b(\nu)\big)\psi(\nu)-\psi'\big(b(\nu)\big)\varphi(\nu)\right],
\end{align*}
with $\zeta$ defined as in \eqref{eq:zeta}. Since $\varphi'<0$ and $\psi'>0$ on $(0,\infty)$ and $\delta>0$ (see \eqref{eq:gammaneg}), we can conclude $b''(\nu) <0$.

This means that any stationary point of $b$ on $(0,x^0]$ must be a maximum. Hence, there can only be one stationary point on $(0,x^0]$ and therefore it must coincide with $\xi$, where $b(\xi)=d(\xi)$. So, $b$ is {\em strictly} increasing on $[0,\xi)$ and {\em strictly} decreasing on $(\xi,x^0]$ (with $F(\nu,b(\nu))\neq 0$ in both intervals). Thus $b(\nu)>d(\nu)$ for $\nu\in[0,\xi)$, because if it were $b(\nu)< d(\nu)$ for some $\nu\in[0,\xi)$ then it would be $b'(\,\cdot\,)<0$ in a right-neighbourhood of $\nu$, which contradicts that $b$ is strictly increasing there. Analogously, it must be $b(\nu)<d(\nu)$ for $\nu\in(\xi,x^0]$.

Notice also that $F_y(x,d(x))\ge 0$ by definition of $d(x)$ and, by the same calculations as above, $F_x(x,d(x))<0$. Since $F(x,d(x))=0$, it follows that $x\mapsto d(x)$ is strictly increasing.
Moreover, noticing that $x<d(x)<\infty$ for all $x> 0$ the limit $d(\infty):=\lim_{x\to\infty}d(x)$ exists and it is infinite.

Clearly $\xi\mapsto b^{\xi,d(\xi)}(x)$ is also increasing (for each $x>0$ and any $\xi>x$) by uniqueness of the solution to \eqref{eq:ODE} and the construction above. Hence 
\[
\bar b(x):=\sup_{\xi>0}b^{\xi,d(\xi)}(x)
\]
is well-defined, and it satisfies the ODE \eqref{eq:ODE}. By construction $\bar b\in\B$ since $\bar b(x)>d(x)$ for all $x> 0$, which implies $F(x,\bar b(x))>0$ for all $x> 0$. Moreover, it is also the minimal element of $\B$ because for any $b\in\B$ it must be $b(x)\ge b^{\xi,d(\xi)}(x)$ for all $x> 0$ and any $\xi> 0$. 
\end{proof}

Let $\bar b$ as in Proposition \ref{prop:monotonicity} and $X^{\bar b}$ and $\gamma^{\bar b}$ as in Lemma \ref{lem:SK}. Take $v^{\bar b}$ as in \eqref{eq:Uc} and recall Corollary \ref{rem:unique}. Then the next result follows.

\begin{corollary}\label{cor:bb}
If the integrability conditions in \eqref{eq:integr} hold for $\gamma^{\bar b}$ and $v^{\bar b}$ then $\bar b=b^*$ and \eqref{inf} holds along with the rest of Theorem \ref{thm:mainthm}. 
\end{corollary}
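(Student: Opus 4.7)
The plan is to show that the proof is essentially a concatenation of results already established in Section \ref{sec:max}. First, under the standing assumptions, Proposition \ref{prop:monotonicity} guarantees that $\bar b = \sup_{\xi > 0} b^{\xi, d(\xi)}$ is a well-defined element of $\B$ and is its minimal element, that is, $\bar b(x) \leq b(x)$ for every $b \in \B$ and every $x > 0$. Consequently, any candidate for the role of $b^*$ in Theorem \ref{thm:mainthm} is already constrained, since Corollary \ref{rem:unique} pins down any $b \in \B$ satisfying the integrability conditions \eqref{eq:integr} as the minimal element of $\B$.

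Second, the hypothesis of the corollary states precisely that $\bar b$ itself satisfies \eqref{eq:integr}. We may therefore invoke Theorem \ref{thm:mainthm} with the explicit choice $b^* := \bar b$: Proposition \ref{prop:Vaineq} then delivers $V = v^{\bar b}$ on $U^\circ$ together with the optimality of $D^{\bar b}$, which is the content of \eqref{eq:vV}. For the minimality relation \eqref{inf}, we combine the uniqueness/minimality statement of Corollary \ref{rem:unique} with the explicit construction of $\bar b$ in Proposition \ref{prop:monotonicity}, obtaining the identification
\[
b^*(y) \;=\; \bar b(y) \;=\; \inf_{b \in \B} b(y), \qquad y > 0.
\]

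There is no real obstacle: all the substantive analytic work, namely the existence of $\bar b$, the ordering of solutions in $\B$, the monotonicity $b \mapsto v^b$, and the verification-type argument, has already been carried out in Propositions \ref{fbp}, \ref{prop:Vaineq}, \ref{prop:increasing}, \ref{prop:monotonicity} and Corollary \ref{rem:unique}. The corollary merely packages these results into a convenient form for applications such as the geometric Brownian motion example in Section \ref{sec8}, where the integrability requirement \eqref{eq:integr} can be checked directly for the explicit candidate $\bar b$.
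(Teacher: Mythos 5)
Your proposal is correct and follows essentially the same route as the paper, which presents Corollary \ref{cor:bb} as an immediate consequence of Proposition \ref{prop:monotonicity} (minimality of $\bar b$ in $\B$), Corollary \ref{rem:unique} (uniqueness of the element of $\B$ satisfying \eqref{eq:integr}), and Theorem \ref{thm:mainthm} applied with $b^*=\bar b$. Your assembly of these ingredients, including the verification step via Proposition \ref{prop:Vaineq}, matches the paper's intended argument.
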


\begin{remark}
Proposition \ref{prop:monotonicity} above gives conditions under which $\B$ is non-empty and allows to construct its minimal element $\bar b$. Corollary \ref{cor:bb} then says that if such minimal element satisfies the integrability conditions as in Theorem~\ref{thm:mainthm} then $\bar b=b^*$ and it is the optimal boundary. While we do not know of general conditions under which the integrability conditions for 
$\overline b$ are satisfied, the following observation can be useful in certain situations.

Assume that two sets of model specifications $(\mu_1,\sigma_1)$ and $(\mu_2,\sigma_2)$ are given, and assume that 
$\mu_1(y)=\mu_2(y)$ and $\sigma_1(y)=\sigma_2(y)$ for $y\geq y_0$. Further assume that $b^*_{1}\in\B_1$ satisfies the integrability conditions for the first set of parameters so that reflection along $b^*_{1}$ is optimal. Construct $b_2$ by 
setting $b_2=b^*_{1}$ for $x\geq y_0$, and by solving \eqref{eq:ODE} for $x\leq y_0$ with boundary condition $b_2(y_0)=b^*_{1}(y_0)$. Then $b_2$ is optimal for $(\mu_2,\sigma_2)$. 
\end{remark}


\section{An optimal stopping problem with oblique reflection}
\label{sec5}

The link between Peskir's maximality principle and our singular control problem passes through a special connection between problems of singular stochastic control with absorption and optimal stopping problems for reflecting diffusions with discounting at the `rate' of local time (see, e.g., \cite{DeA20} and \cite{DeAE17}).

Letting $u^*:=-v^*_x$, with $v^*$ as in Theorem \ref{thm:mainthm}, and using the explicit expression \eqref{eq:Uc} we obtain 
\begin{align} \label{eq:ode-u}
\begin{array}{lll}
(i) & \L u^* (x,y)= 0 &\mbox{for $0<x<y<b^*(x)$}, \\
(ii)& u^*(x,y) = 1 &\mbox{for $y \ge b^*(x)$}, \\
(iii) & u^*(x,y)\ge 1&\mbox{for all $(x,y)\in U^\circ$},\\
(iv) &\L u^*(x,y)\leq 0 &\mbox{for a.e.\ $(x,y)\in U^\circ$}.
\end{array}
\end{align}
Notice that all but the final equation above can also be obtained by simply differentiating \eqref{eq:thPDE} with respect to $x$.
Moreover, the boundary condition in \eqref{eq:smf+ref} reads
\begin{align}\label{eq:refl-u*}
\frac{\sigma^2(x)}{2}\left(u^*_x(x,x)+2u^*_y(x,x)\right)+\mu(x)u^*(x,x)=0,\quad\text{for $x>0$}.
\end{align}
Since $u^*(x,y)>1$ for $x<y<b^*(x)$ by definition of $u^*$ and \eqref{eq:vx-1}, we see that $u^*$ appears 
to be the value function of an optimal stopping problem whose underlying process $(\hat X,\hat Y)$ has a peculiar behaviour along the diagonal due to \eqref{eq:refl-u*}. 

First we construct the process $(\hat X,\hat Y)$, then we state the optimal stopping problem and finally we make a precise claim on the connection between $V=v^*$ and $u^*$. 
For $y\geq x>0$, let $(\hat X,\hat Y)$ be defined by the system 
\begin{equation} \label{eq:reflectedSDE}
\left\{\begin{array}{ll}
\ud\hat Y_t = \mu (\hat Y_t )\ud t + \sigma(\hat Y_t) \ud W_t + \ud \hat A_t, & \hat Y_0=y, \\[+4pt]
\hat X_t = x + \frac{1}{2} \hat A_t, & \\[+4pt]
\hat Y_t \geq \hat X_t\:\:\text{and}\:\: \int_0^t 1_{\{\hat Y_t > \hat X_t \}} \ud\hat A_t=0, & \text{for all $t\ge 0$},\\
\end{array}
\right.
\end{equation}
where $(\hat A_t)_{t\ge 0}$ is a continuous and increasing process and all equations hold $\P$-a.s. The pair $(\hat X,\hat Y)$ is the solution of a two-dimensional degenerate reflecting SDE, with reflection occurring at the diagonal $\hat X=\hat Y$ but in the direction ${\bf v}=(\tfrac{1}{2},1)$. The next lemma states that such a process is uniquely determined. We postpone its proof to the appendix so that we instead can continue with the construction of our optimal stopping problem.

\begin{lemma}\label{le:existencereflectedSDE}
There exists a pathwise unique $(\F_t)_{t\ge 0}$-adapted solution $(\hat X,\hat Y)$ to \eqref{eq:reflectedSDE}. Moreover, the process $(\hat X,\hat Y)$ is strong Markov.
\end{lemma}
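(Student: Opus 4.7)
The plan is to reduce the two-dimensional oblique reflection in \eqref{eq:reflectedSDE} to a one-dimensional Skorokhod problem and then run a Picard iteration. Setting $\hat Z_t:=\hat Y_t-\hat X_t$ and subtracting the two state equations gives
\[
\hat Z_t=(y-x)+\int_0^t \mu(\hat Y_s)\,\ud s+\int_0^t \sigma(\hat Y_s)\,\ud W_s+\tfrac{1}{2}\hat A_t,
\]
with $\hat Z\ge 0$, $\hat A$ continuous and non-decreasing from $\hat A_0=0$, and $\int_0^t 1_{\{\hat Z_s>0\}}\,\ud\hat A_s=0$. Writing the driver
\[
\hat N_t:=(y-x)+\int_0^t \mu(\hat Y_s)\,\ud s+\int_0^t \sigma(\hat Y_s)\,\ud W_s,
\]
the pair $(\hat Z,\hat A/2)$ solves the classical one-dimensional Skorokhod reflection of $\hat N$ at $0$. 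Since $\hat N_0=y-x\ge 0$, this has the unique explicit solution $\hat A_t=2\sup_{0\le s\le t}(-\hat N_s)^+$ and $\hat Z_t=\hat N_t+\hat A_t/2$, and the associated Skorokhod map $\Gamma:f\mapsto\sup_{\cdot}(-f)^+$ is $1$-Lipschitz in the uniform norm. Consequently, \eqref{eq:reflectedSDE} collapses to the single fixed-point equation
\[
\hat Y_t=y+\int_0^t \mu(\hat Y_s)\,\ud s+\int_0^t \sigma(\hat Y_s)\,\ud W_s+2\Gamma(\hat N)_t,
\]
for a continuous $(\F_t)$-adapted process $\hat Y$, from which $\hat X_t=x+\Gamma(\hat N)_t$ is recovered a posteriori.

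I would then construct the solution by Picard iteration, setting $\hat Y^{(0)}_t\equiv y$ and defining $\hat Y^{(n+1)}$ by plugging $\hat Y^{(n)}$ into the right-hand side above. Introducing the stopping times $\tau_K^{(n)}:=\inf\{t\ge 0:\hat Y^{(n)}_t\ge K\}$ and combining (i) the local Lipschitz continuity of $\mu,\sigma$ on $[0,K]$, (ii) the $1$-Lipschitz property of $\Gamma$ in the uniform norm, and (iii) the Burkholder--Davis--Gundy inequality together with Gronwall's lemma, I would obtain that the stopped iterates are Cauchy in $L^2$ on compact time intervals, with limit $\hat Y$ satisfying the fixed-point equation up to $\tau_K$. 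The at-most-linear growth of $\mu,\sigma$ together with the bound $\hat A_t\le 2\sup_{s\le t}|\hat N_s|$ yields the usual moment estimates, from which $\tau_K\uparrow\infty$ a.s., giving global existence. The same $L^2$ estimate applied to the difference of two candidate solutions delivers pathwise uniqueness.

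Finally, the strong Markov property will follow from pathwise uniqueness by the standard argument. The constructed solution is a measurable functional of $(x,y)$ and the driving Brownian path; the coefficients of \eqref{eq:reflectedSDE}, the direction of reflection, and the functional $\Gamma$ are all time-homogeneous; applying the fixed-point equation to the Brownian increments after a stopping time $\tau$ and invoking uniqueness realises the post-$\tau$ evolution as the unique solution started from $(\hat X_\tau,\hat Y_\tau)$, which yields the strong Markov property on $\{(u,v):0\le u\le v\}$. The hardest part, I expect, is that unlike in a classical reflected SDE the reflection term $\hat A$ enters the coefficients through $\hat Y_t=x+\hat Z_t+\hat A_t/2$, so $\mu$ and $\sigma$ feel the pushing at the diagonal. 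The key insight that makes the iteration work is to take $\hat Y$ (rather than $\hat Z$) as the fixed-point variable and to use the uniform-norm Lipschitz property of $\Gamma$; the localisation must also be handled with some care, since the growth of $\hat Y$ and of $\hat A$ are coupled.
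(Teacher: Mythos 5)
Your argument is correct, but it follows a genuinely different route from the paper's. You reduce the oblique reflection directly to the half-line Skorokhod problem for the difference $\hat Z=\hat Y-\hat X$ and then, crucially, treat $\hat Y$ itself as the unknown in a fixed-point equation involving the explicit ($1$-Lipschitz) Skorokhod map, so that Picard iteration with BDG and Gronwall delivers existence \emph{and} pathwise uniqueness in one stroke, with the strong Markov property obtained from the standard strong-solution/flow argument. The paper instead splits the two tasks: uniqueness and the strong Markov property are imported from Bass \cite[Thm.\ 12.4]{Bass} for obliquely reflecting diffusions (localised and then patched), while existence is obtained by first removing the drift via the scale function \eqref{eq:S} and the diffusion coefficient via a time change, after which the driver of the Skorokhod problem is a Brownian motion \emph{independent of the unknown}, so the solution of \eqref{eq:reflectedSDE} is fully explicit and no iteration is needed. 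What your approach buys is self-containedness: it avoids Bass and \cite{LS} entirely (note the paper needs the transformation trick precisely because Bass's existence theory assumes non-degeneracy, which fails here), and it gives pathwise uniqueness without external references; what the paper's approach buys is the absence of any contraction estimates, at the cost of the scale/time-change bookkeeping and the appeal to Bass for uniqueness. Two small refinements you should make when writing this up: handle the local Lipschitz hypothesis by truncating the coefficients to globally Lipschitz ones and patching via uniqueness and the linear-growth non-explosion bound, rather than stopping the iterates at iterate-dependent times $\tau_K^{(n)}$ (which makes the Cauchy estimate awkward); and observe that $\hat Y_t\ge \hat X_t\ge x>0$, so the coefficients are only ever evaluated on $[x,\infty)$, where the paper's assumptions give local Lipschitz continuity and linear growth — this also sidesteps any issue at the boundary point $0$. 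With those adjustments the proposal is a complete and valid alternative proof.
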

Let us define the process 
\[
A_t := \int_0^t \frac{\mu(\hat Y_s)}{\sigma^2(\hat Y_s)} \ud \hat A_s
\]
and the optimal stopping problem with value
\begin{equation} \label{osp}
\widehat V(x,y):= \sup_{\tau} \E_{x,y} \left [  \exp (A_\tau -r\tau)\right],\quad\text{for $0<x\le y$,}
\end{equation}
where the supremum is taken over $\P_{x,y}$-a.s.\ finite $(\F_t)$-stopping times. 
In essence, \eqref{osp} is an optimal stopping problem for the reflected strong Markov
process $(\hat X,\hat Y)$, which pays $1$ for immediate stopping and which gains value when the process $\hat Y$ is reflected in the diagonal $x=y$. 
The next result confirms that $\widehat V =u^*=-v^*_x=-V_x$.

\begin{theorem} \label{thm:stoppingconnection}
Let the assumptions of Theorem \ref{thm:mainthm} hold here (so that $v^*=V$ on $U^\circ$ and $b^*\in\B$ is optimal). Set
\[
\tau^*:=\inf\{t\ge 0: \hat Y_t\ge b^*(\hat X_t)\},
\]
recall $u^*:=-v^*_x$ and fix arbitrary $y\geq x> 0$. 
Assume $\P_{x,y}(\tau^*<\infty)=1$ and for any sequence of stopping times $(\tau_n)_{n\ge 1}$ with $\tau_n\uparrow \infty$, $\P_{x,y}$-a.s.\ as $n\to\infty$, the  transversality condition 
\begin{align}\label{trans}
\lim_{n\to\infty}\E_{x,y}\left[e^{A_{\tau_n}-r\tau_n}u^*(\hat X_{\tau_n},\hat Y_{\tau_n})1_{\{\tau_n<\tau^*\}}\right]=0
\end{align}
holds.
Then $u^*(x,y)=\widehat V(x,y)$ with $\widehat V$ as in \eqref{osp} and $\tau^*$ is an optimal stopping time.
\end{theorem}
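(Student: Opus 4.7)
The strategy is a verification argument: show that $Z_t := e^{A_t - rt}\, u^*(\hat X_t, \hat Y_t)$ is a $\P_{x,y}$-local supermartingale on $[0,\infty)$ and a local martingale on $[0,\tau^*]$, then read off both the upper and the lower bound on $u^*$. The key computation is the It\^o decomposition. Using $d\hat Y_t = \mu(\hat Y_t)\,dt + \sigma(\hat Y_t)\,dW_t + d\hat A_t$, $d\hat X_t = \tfrac12\, d\hat A_t$, and $dA_t = (\mu/\sigma^2)(\hat Y_t)\,d\hat A_t$, we find
\begin{align*}
dZ_t &= e^{A_t-rt}\,\L u^*(\hat X_t,\hat Y_t)\,dt + e^{A_t-rt}\,u^*_y(\hat X_t,\hat Y_t)\,\sigma(\hat Y_t)\,dW_t \\
&\qquad + e^{A_t-rt}\Bigl[\tfrac12 u^*_x + u^*_y + (\mu/\sigma^2)\,u^*\Bigr](\hat X_t,\hat Y_t)\,d\hat A_t.
\end{align*}
Since $d\hat A_t$ is supported on $\{\hat X_t=\hat Y_t\}$ and $b^*(x)>x$, the bracket is evaluated on the diagonal inside the continuation region; divided by $\sigma^2(x)$, the reflection/creation equation \eqref{eq:refl-u*} says precisely that this bracket vanishes there. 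The direction of reflection $(\tfrac12,1)$ in \eqref{eq:reflectedSDE} was engineered precisely for this cancellation, so only the drift $e^{A_t-rt}\L u^*\,dt$ (non-positive by \eqref{eq:ode-u}(iv)) and the Brownian martingale survive.

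For the upper bound, let $(\tau_n)_{n\ge 1}$ be a localizing sequence for the Brownian integral, with $\tau_n\uparrow\infty$; then for any a.s.\ finite stopping time $\tau$,
\[
u^*(x,y)\;\ge\;\E_{x,y}\!\left[Z_{\tau\wedge\tau_n}\right]\;\ge\;\E_{x,y}\!\left[e^{A_{\tau\wedge\tau_n}-r(\tau\wedge\tau_n)}\right],
\]
using $u^*\ge 1$ from \eqref{eq:ode-u}(iii). Fatou together with the continuity of $t\mapsto A_t$ and $\tau<\infty$ a.s., followed by taking the supremum over $\tau$, yields $u^*\ge\widehat V$. For the matching lower bound, take $\tau=\tau^*$: on $[0,\tau^*]$ the pair $(\hat X_t,\hat Y_t)$ stays in $\{x\le y\le b^*(x)\}$, where $\L u^*=0$ by \eqref{eq:ode-u}(i), so $Z_{\cdot\wedge\tau^*}$ is a bona fide local martingale. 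By continuity of paths and the continuous-fit $u^*(x,b^*(x))=1$, we have $u^*(\hat X_{\tau^*},\hat Y_{\tau^*})=1$, hence $Z_{\tau^*}=e^{A_{\tau^*}-r\tau^*}$. Splitting
\[
u^*(x,y)=\E_{x,y}\!\left[Z_{\tau^*}\mathbf{1}_{\{\tau_n\ge\tau^*\}}\right]+\E_{x,y}\!\left[Z_{\tau_n}\mathbf{1}_{\{\tau_n<\tau^*\}}\right],
\]
the first term converges to $\E_{x,y}[e^{A_{\tau^*}-r\tau^*}]$ by monotone convergence (using $\P_{x,y}(\tau^*<\infty)=1$), and the second vanishes by the transversality assumption \eqref{trans}. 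This gives $u^*(x,y)=\E_{x,y}[e^{A_{\tau^*}-r\tau^*}]\le\widehat V(x,y)$, and combined with the reverse inequality closes the proof and shows optimality of $\tau^*$.

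The main technical obstacle is justifying the It\^o step under the regularity actually available: Proposition \ref{fbp} gives $v^*\in C^1(U^\circ)$ and $v^*_y\in C^1(U^\circ)$, so $u^*=-v^*_x$ has continuous $u^*_x$ and $u^*_y$, but $u^*_{yy}$ has a jump across the optimal curve $\{y=b^*(x)\}$ (one-sided value $2r/\sigma^2(b^*(x))$ from the interior ODE $\L u^*=0$, versus $0$ on the side where $u^*\equiv 1$). A generalised It\^o/Tanaka-type formula (in the spirit of Peskir's change-of-variable on curves) handles this: the smooth-fit condition \eqref{eq:smf-u} makes $u^*_y$ continuous across $b^*$, so no local-time contribution from the curve appears, and the a.e.\ value of $u^*_{yy}$ suffices to drive the formula. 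The remaining technicalities---localization of the Brownian integral and the integrability needed to pass the convergence steps---are standard and follow from \eqref{eq:integr} together with $u^*\ge 1$ and the structure of $(\hat X,\hat Y)$.
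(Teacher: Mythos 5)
Your proposal is correct and follows essentially the same route as the paper: an It\^o/verification argument for $e^{A_t-rt}u^*(\hat X_t,\hat Y_t)$ in which the $\ud\hat A$-term is cancelled by the reflection/creation condition \eqref{eq:refl-u*}, the supermartingale property plus $u^*\ge 1$ and Fatou give $u^*\ge\widehat V$, and stopping at $\tau^*$ with the transversality condition \eqref{trans} gives the reverse inequality (the paper invokes the Alsmeyer--Jaeger extension of It\^o's formula where you invoke a Peskir-type change-of-variable on curves, which is the same device). The only cosmetic difference is that the paper localizes with deterministic times $t_n$ together with the localizing times $\theta_n$ for the stochastic integral, a detail your sketch glosses over but which is easily accommodated.
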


\begin{proof}
The proof consists of a standard verification argument. Set $\C:=\{(x,y)\in U^\circ: y<b^*(x)\}$ and denote $\overline\C=\{(x,y)\in U^\circ: y\le b^*(x)\}$ and $\S=\{(x,y)\in U^\circ :y\ge b^*(x)\}$.

Thanks to the regularity of $v^*$ in Proposition~\ref{fbp}
we know that $u^*\in C^1(U^\circ)$. Then, from $(i)$ and $(ii)$ in \eqref{eq:ode-u} we also obtain that $u^*_{yy}$ is locally bounded on $U^\circ$ and it is continuous separately in the sets $\overline \C$ and $\S$, because
\[
u^*_{yy}(x,y)=2 \sigma^{-2}(-\mu u^*_y+r u^*)(x,y),\quad\text{for $x<y<b^*(x)$},
\]
and $u^*_{yy}=0$ on $\S$. 
(Notice that $u^*_{yy}$ is not continuous across the boundary.) 

For any initial points $0<x\le y$ we can apply the change-of-variable formula derived in \cite{AJ} to $e^{A_t-rt}u^*(\hat X_t,\hat Y_t)$. In particular, for any stopping time $\tau$ we obtain
\begin{align}\label{eq:Ito}
 &e^{A_{t\wedge\tau\wedge\theta_n} - r(t\wedge\tau\wedge\theta_n)}u^*(\hat X_{t\wedge\tau\wedge\theta_n}, \hat Y_{t\wedge\tau\wedge\theta_n})\\
 &= u^*(x,y) + \int_0^{t\wedge\tau\wedge\theta_n} e^{A_s - rs} \L u^*(\hat X_s, \hat Y_s)\,\ud s + M_{t\wedge\tau\wedge\theta_n} \notag \\
 &\quad+ \int_0^{t\wedge\tau\wedge\theta_n} e^{A_s -rs} \bigg[  \frac{\mu(\hat Y_s)}{\sigma^2(\hat Y_s)} u^*(\hat X_s, \hat Y_s) +u^*_y ( \hat X_s , \hat Y_s) + \frac{1}{2}u^*_x(\hat X_s, \hat Y_s) \bigg ]\ud \hat A_s\notag
\end{align}
where $M_t$ is the local martingale 
\[
M_t:=\int_0^t e^{A_s-rs}u^*_y(\hat X_s,\hat Y_s)\sigma(\hat Y_s)\ud W_s
\]
and 
\[
\theta_n:=\inf\{t\ge 0:\langle M\rangle_t\ge n\}
\]
is the usual localising sequence. Now we recall the minimality condition in the final line of \eqref{eq:reflectedSDE} that guarantees 
\[
\ud \hat A_s=1_{\{\hat X_s=\hat Y_s\}}\ud \hat A_s.
\] 
Plugging this into \eqref{eq:Ito} and recalling also \eqref{eq:refl-u*}, we see that the integral in $\ud\hat A_s$ vanishes. Then, taking expectations and rearranging terms we have
\begin{align*}
u^*(x,y)
& =\E_{x,y}\left[e^{A_{t\wedge\tau\wedge\theta_n} - r(t\wedge\tau\wedge\theta_n)}u^*(\hat X_{t\wedge\tau\wedge\theta_n},\hat Y_{t\wedge\tau\wedge\theta_n})-\int_0^{t\wedge\tau\wedge\theta_n} e^{A_s - rs} \L u^*(\hat X_s, \hat Y_s)\ud s\right]
\\
& \ge \E_{x,y}\left[e^{A_{t\wedge\tau\wedge\theta_n} - r(t\wedge\tau\wedge\theta_n)}u^*(\hat X_{t\wedge\tau\wedge\theta_n}, \hat Y_{t\wedge\tau\wedge\theta_n})\right]\\
&\ge \E_{x,y}\left[e^{A_{t\wedge\tau\wedge\theta_n} - r(t\wedge\tau\wedge\theta_n)}\right],
\end{align*}
where the first inequality comes from $(iv)$ in \eqref{eq:ode-u} (notice that $\hat X$ is constant off the diagonal and $\hat Y$ admits a transition density with respect to the Lebesgue measure) and the second one from $(iii)$. Using Fatou's lemma, we can take limits inside the expectation, as $n\to\infty$ and $t\to\infty$. Thus, by arbitrariness of the stopping time $\tau$ we have
\begin{align}\label{eq:u*U}
u^*(x,y)\ge \sup_\tau\E_{x,y}\left[{\exp(A_{\tau} - r\tau)}\right]=\widehat V(x,y).
\end{align}

For the reverse inequality, we pick $\tau=\tau^*$ in \eqref{eq:Ito} and $t=t_n$, for some deterministic $(t_n)_{n\ge 1}$ increasing to infinity. Then by $(i)$ in \eqref{eq:ode-u} we get
\begin{align*}
u^*(x,y)
&=\E_{x,y}\bigg[e^{A_{t_n\wedge\tau^*\wedge\theta_n} - r(t_n\wedge\tau^*\wedge\theta_n)}u^*(\hat X_{t_n\wedge\tau^*\wedge\theta_n}, \hat Y_{t_n\wedge\tau^*\wedge\sigma_n})-\int_0^{t_n\wedge\tau^*\wedge\theta_n} e^{A_s - rs} \L u^*(\hat X_s, \hat Y_s)\ud s\bigg]
\\
&=\E_{x,y}\left[e^{A_{t_n\wedge\tau^*\wedge\theta_n} - r(t_n\wedge\tau^*\wedge\theta_n)}u^*(\hat X_{t_n\wedge\tau^*\wedge\theta_n}, \hat Y_{t_n\wedge\tau^*\wedge\theta_n})\right]\\
&=\E_{x,y}\left[e^{A_{\tau^*} - r \tau^*}1_{\{\tau^*\le \theta_n\wedge t_n\}}+e^{A_{t_n\wedge\theta_n} - r(t_n\wedge\theta_n)}u^*(\hat X_{t_n\wedge\theta_n}, \hat Y_{t_n\wedge\theta_n})1_{\{\tau^*> \theta_n\wedge t_n\}}\right].
\end{align*}
Now, letting $n\to\infty$ and using the transversality condition \eqref{trans} we obtain
\[
u^*(x,y)=\left[e^{A_{\tau^*} - r\tau^*}\right]\le \widehat V(x,y),
\]
hence concluding the proof. 
\end{proof}

\section{The Case of Geometric Brownian motion}\label{sec8}

In this section we consider the case when $Y$ is a geometric Brownian motion, i.e.\ we assume that
$\mu(y)=\alpha y$ and $\sigma(y)=\beta y$ where $\alpha$ and $\beta$ are positive constants so that
\[
dY_t=\alpha Y_t\,\ud t + \beta Y_t\,\ud W_t.
\] 
To ensure the integrability condition \eqref{eq:integr}, we further assume that $\alpha<r$. Notice that condition \eqref{eq:zeta} in Proposition \ref{prop:monotonicity} holds.

The fundamental solutions of the equation $\L f=0$ are given by $\varphi(y)=y^{\gamma_1}$ and $\psi(y)=y^{\gamma_2}$, where $\gamma_1<0<\gamma_2<1$ are solutions of the quadratic equation
\[
\gamma^2 + (\frac{2\alpha}{\beta^2}-1)\gamma-\frac{2r}{\beta^2}=0.
\]
The function $F$ is given by
\[F(x,y)=\frac{2y}{x}-\frac{\alpha y}{rx}\frac{\gamma_2\varphi(x)\psi(y)-\gamma_1\varphi(y)\psi(x)}{\varphi(x)\psi(y)-\varphi(y)\psi(x)},\]
so $F(x,y)$ is constant along rays $y=Ax$, $A>1$. Thus $xF_x+yF_y=0$, and the function $G(z):=F(x,zx)$ is independent of the choice of $x>0$.

\begin{lemma}\label{prop}
The function $G:(1,\infty)\to\R$ is strictly increasing, with $G(1+)=-\infty$.
Moreover, $G(A)=0$ for 
\[A=\left(\frac{2r-\alpha\gamma_1}{2r-\alpha\gamma_2}\right)^{\frac{1}{\gamma_2-\gamma_1}} >1,\]
and $G(C)=C$ for 
\[C = \left(\frac{r-\alpha\gamma_1}{r-\alpha\gamma_2}\right)^{\frac{1}{\gamma_2-\gamma_1}}>A.\]
Furthermore, $G(z)-z$ is negative on $(1,C)$ and positive on $(C,\infty)$.
\end{lemma}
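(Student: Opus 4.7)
The plan is to substitute the power fundamental solutions $\varphi(y)=y^{\gamma_1}$ and $\psi(y)=y^{\gamma_2}$ into the definition of $F$ and exploit the fact that every term scales homogeneously in $y/x$. Setting $\nu:=\gamma_2-\gamma_1$, $u:=z^{\nu}$, together with the abbreviations $p:=2r-\alpha\gamma_2$ and $q:=2r-\alpha\gamma_1$, a direct computation reduces the given expression for $F(x,zx)$ to
$$G(z)=\frac{z(pu-q)}{r(u-1)},\qquad u=z^{\nu}>1.$$
Before proceeding I would record the sign facts used throughout. Evaluating the characteristic quadratic at $\gamma=1$ gives $2(\alpha-r)/\beta^{2}<0$, so $1$ lies strictly between $\gamma_{1}<0$ and $\gamma_{2}$, whence $\nu=\gamma_{2}-\gamma_{1}>1$. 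Likewise, evaluating at $\gamma=r/\alpha$ yields a positive value, placing $r/\alpha$ to the right of $\gamma_{2}$, so $r-\alpha\gamma_{2}>0$. From these one immediately gets $0<p<q$ and $0<p-r<q-r$.

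The limit $G(1+)=-\infty$ is transparent: the numerator of $G(z)$ tends to $p-q<0$, while the denominator vanishes from above. The equation $G(A)=0$ forces $pu=q$, hence $A^{\nu}=q/p=(2r-\alpha\gamma_{1})/(2r-\alpha\gamma_{2})$, which gives the stated expression for $A$ and shows $A>1$ because $q>p$. For $G(C)=C$, rearranging gives $(p-r)u=q-r$, so $C^{\nu}=(q-r)/(p-r)=(r-\alpha\gamma_{1})/(r-\alpha\gamma_{2})$. The inequality $C>A$ reduces algebraically to $(q-r)/(p-r)>q/p$, i.e. $q>p$, already recorded.

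The sign analysis of $G(z)-z$ follows without further work from the rearrangement
$$G(z)-z=\frac{z\bigl[(p-r)u-(q-r)\bigr]}{r(u-1)},$$
whose bracket is strictly increasing in $u$ and vanishes exactly at $u=C^{\nu}$; since $z(u-1)>0$ for $z>1$, one reads off $G(z)-z<0$ on $(1,C)$ and $G(z)-z>0$ on $(C,\infty)$.

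The most delicate step is strict monotonicity. Differentiating $G$ with respect to $u$ and clearing positive factors, I expect to arrive at $\mathrm{sign}(G'(u))=\mathrm{sign}(N(u))$ for $u>1$, where
$$N(u):=(u-1)(pu-q)+\nu u(q-p)=pu^{2}-\bigl[(1-\nu)q+(1+\nu)p\bigr]u+q.$$
Then $N(1)=\nu(q-p)>0$ and $N'(1)=(1-\nu)(p-q)>0$, the latter because $\nu>1$ and $p<q$. Since $N$ is a parabola opening upward with $N(1)>0$ and $N'(1)>0$, its vertex lies strictly below $u=1$, so $N$ is increasing on $[1,\infty)$ and remains strictly positive. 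This yields $G'>0$ on $u>1$, and by monotonicity of $u=z^{\nu}$, $G$ is strictly increasing on $(1,\infty)$. I expect the main obstacle to be the bookkeeping in the derivative computation leading to the clean form of $N$; once that is in place, every remaining assertion collapses to the elementary sign facts gathered at the start.
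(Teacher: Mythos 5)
Your proof is correct, and it is organised differently from the paper's. The paper's proof dismisses $G(1+)=-\infty$, $G(A)=0$ and $G(C)=C$ as straightforward (and in fact never writes out an argument for the strict monotonicity of $G$), and it obtains the sign of $G(z)-z$ by introducing the auxiliary function $H(z):=\frac{(G(z)-z)(\psi(z)-\varphi(z))}{z}=(1-\tfrac{\alpha\gamma_2}{r})\psi(z)-(1-\tfrac{\alpha\gamma_1}{r})\varphi(z)$, which is strictly increasing because $\gamma_2<r/\alpha$ and vanishes at $C$. You instead reduce everything to the explicit rational form $G(z)=\frac{z(pz^{\nu}-q)}{r(z^{\nu}-1)}$ with $p=2r-\alpha\gamma_2$, $q=2r-\alpha\gamma_1$, which makes the limit at $1+$, the roots $A$ and $C$, the inequality $C>A$, and the sign of $G(z)-z$ immediate, and — the genuinely additional content — you verify the monotonicity claim by showing that the sign of $G'$ is that of the upward parabola $N(u)=(u-1)(pu-q)+\nu u(q-p)$, with $N(1)>0$ and $N'(1)>0$; I checked the algebra and $N$ is exactly the numerator one gets after clearing positive factors, so this fills a step the paper leaves implicit. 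Your argument does rely on $\nu=\gamma_2-\gamma_1>1$, which you correctly deduce from $Q(1)=2(\alpha-r)/\beta^2<0$ under the standing assumption $\alpha<r$; note that this gives $\gamma_1<0<1<\gamma_2$, so the inequality $\gamma_2<1$ displayed in the text is a slip (the paper's own argument only uses $\gamma_2<r/\alpha$, which you also verified), and your sign facts $0<p<q$ and $0<p-r<q-r$ are the ones actually needed. In short: same elementary spirit, but your route is more explicit and proves strictly more (the monotonicity), while the paper's $H$-trick is a slightly slicker way to read off the sign of $G(z)-z$ directly from the fundamental solutions.
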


\begin{proof}
The claims that $G(1+)=-\infty$, $G(A)=0$ and $G(C)=C$ are straightforward to verify. For the last claim, note that the function 
\[H(z):=\frac{(G(z)-z)(\psi(z)-\varphi(z))}{z}
= (1-\frac{\alpha\gamma_2}{r}) \psi(z)-(1-\frac{\alpha\gamma_1}{r})\varphi(z)\]
is strictly increasing since $\gamma_2<r/\alpha$ and satisfies $H(C)=0$. Consequently, $H$ is negative 
on $(1,C)$ and positive on $(C,\infty)$, and so is also $G(z)-z$ because $\psi(z)-\varphi(z)>0$ for all $z>1$.
\end{proof}

\begin{figure}
\includegraphics[scale=0.2]{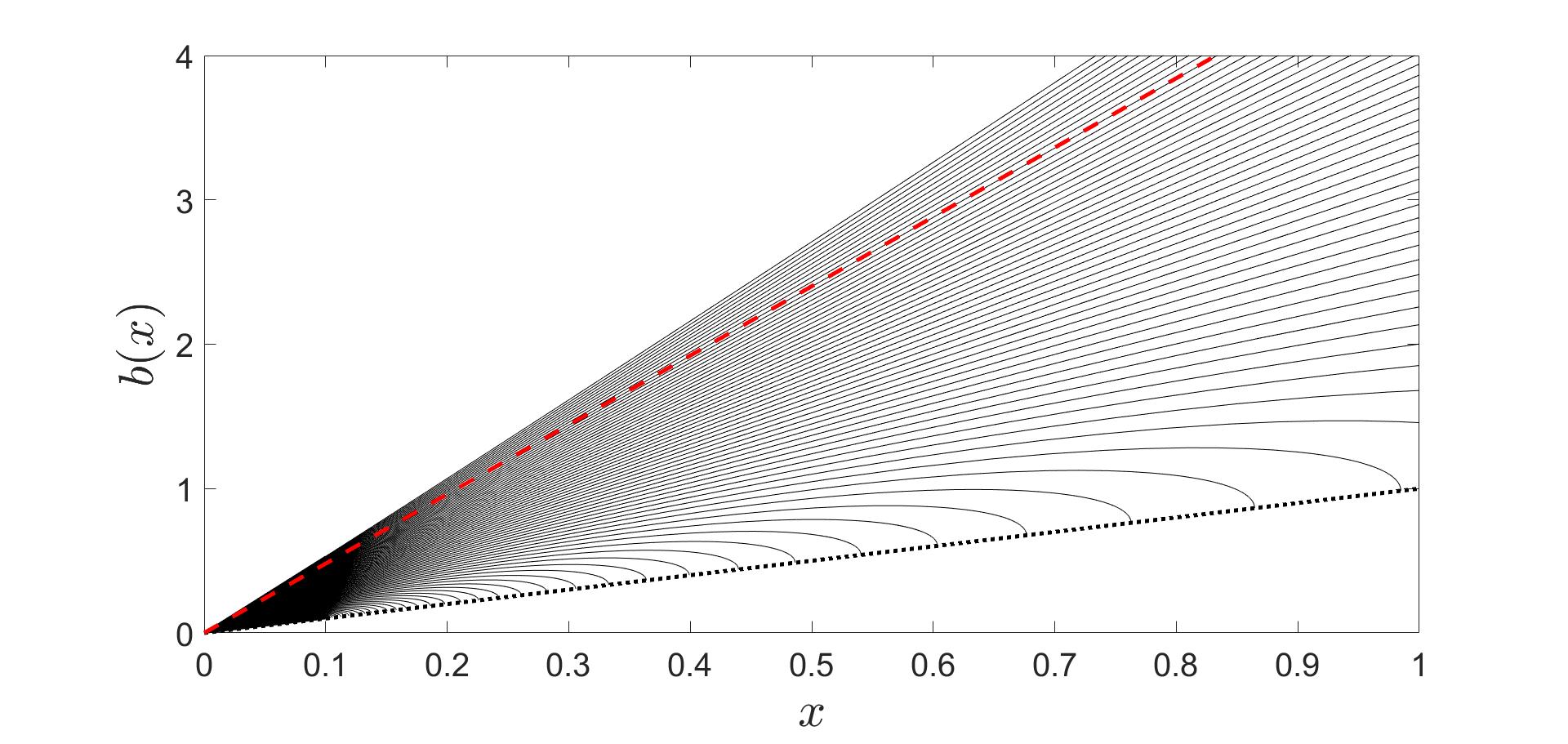}
\caption{Solutions to \eqref{ODEGBM} for varying initial data using $r=0.05$,  $\alpha= 0.04 $, and $\beta=0.3$. All solutions below $b^*(x)=Cx$ with $C\approx 4.80$ (dashed line) are concave
and hit the diagonal (dotted line), whereas all solutions above $b^*$ are convex. Solutions are obtained using MATLABs ODE-solver \texttt{ode15s}.}
\label{fig2}
\end{figure}

Solutions of the ODE \eqref{eq:ODE}  In this setting, the optimal dividend strategy is described by an affine boundary $b^*$ (dashed line).

It follows from Lemma~\ref{prop} that one solution of the ODE 
\begin{equation}
\label{ODEGBM}
b'(x)=2\frac{b(x)}{x}-\frac{\alpha b(x)}{rx}\frac{\gamma_2\varphi(x)\psi(b(x))-\gamma_1\varphi(b(x))\psi(x)}{\varphi(x)\psi(b(x))-\varphi(b(x))\psi(x)}
\end{equation}
for the boundary is given by $b^*(x)=Cx$. Since $C>1$, we have $b^*\in\B$. Moreover, if $b\in\B$ is another solution
with $b\leq b^*$, then $F(x,b(x)) = G(b(x)/x)\leq b(x)/x$ by Lemma~\ref{prop}, so
\[b''(x)=F_x(x,b(x))+F(x,b(x))F_y(x,b(x))\leq F_x(x,b(x))+\frac{b(x)}{x}F_y(x,b(x))=0\]
at all points since $xF_x(x,y)+yF_y(x,y)=0$. 
Consequently, any such $b$ is concave and, in the next paragraph, we will use such concavity to show that $b^*$
is the smallest solution that stays above the diagonal (Figure \ref{fig2}).

Assume that $b$ satisfies $b'(x)=F(x,b(x))$ and $x<b(x)\leq Cx$ for $x\in(0,\infty)$. First note that if
$b(x)\leq Ax$ for some $x\in(0,\infty)$, then $b'(y)\leq 0$ for $y\geq x$, so $b$ would not 
stay above the diagonal. Therefore we must have that $Ax\leq b(x)\leq Cx$ for
all $x$. By concavity, it then has an asymptote as $x\to\infty$, say $b(x)\sim Dx+E$ for some constants $D\in [A,C]$ and $E\geq 0$, with $b(x)\leq Dx+E$. However, then 
\[D\leq b'(x)=F(x,b(x))\leq F(x,Dx+E)\to F(x,Dx)\leq D\] 
as $x\to\infty$. Since the last inequality is strict if $D<C$, we must have $D=C$, and 
then also $E=0$ for the inequality $b\leq b^*$ to hold. This shows that $b=b^*$, 
so $b^*$ is the smallest element of $\B$.

The maximality principle thus suggests that the optimal dividend boundary is given by $b^*(x)=Cx$, where
$C$ is as above. Consequently, $\gamma_*=\inf\{t\geq 0:Y_t\leq \frac{M_t}{C}\vee x\}$, where $M_t:=\sup_{0\leq s\leq t}Y_s$
is the maximum process. Clearly, $\gamma_*\leq \inf\{n\geq 1:Y_n\leq\frac {Y_{n-1}}{C}\}<\infty$, $\P_{x,y}$-a.s.
Moreover, 
\[
\overline v^*(x,y)=\frac{y}{\gamma_2-\gamma_1}\Big(\frac{\gamma_2C^{-\gamma_1}}{1-\gamma_1}  \big(1-(x/y)^{1-\gamma_1}\big)
-\frac{\gamma_1C^{-\gamma_2}}{1-\gamma_2}  \big(1-(\frac{x}{y})^{1-\gamma_2}\big)\Big),
\]
and in particular, $v^*(0,y)=\overline v^*(y,y/C)+y/C=Ny$ for some constant $N$. Consequently, 
\[\E_{x,y}\left[\sup_{t\geq 0}\left\{e^{-rt}v^*(0,Y_t)\right\}\right]= N\E_{x,y}\left[\sup_{t\geq 0}\left\{e^{-rt}Y_t\right\}\right]<\infty
\]
since $e^{-rt}Y_t$ is a geometric Brownian motion with (strictly) negative drift. 

\begin{figure}
\includegraphics[scale=0.2]{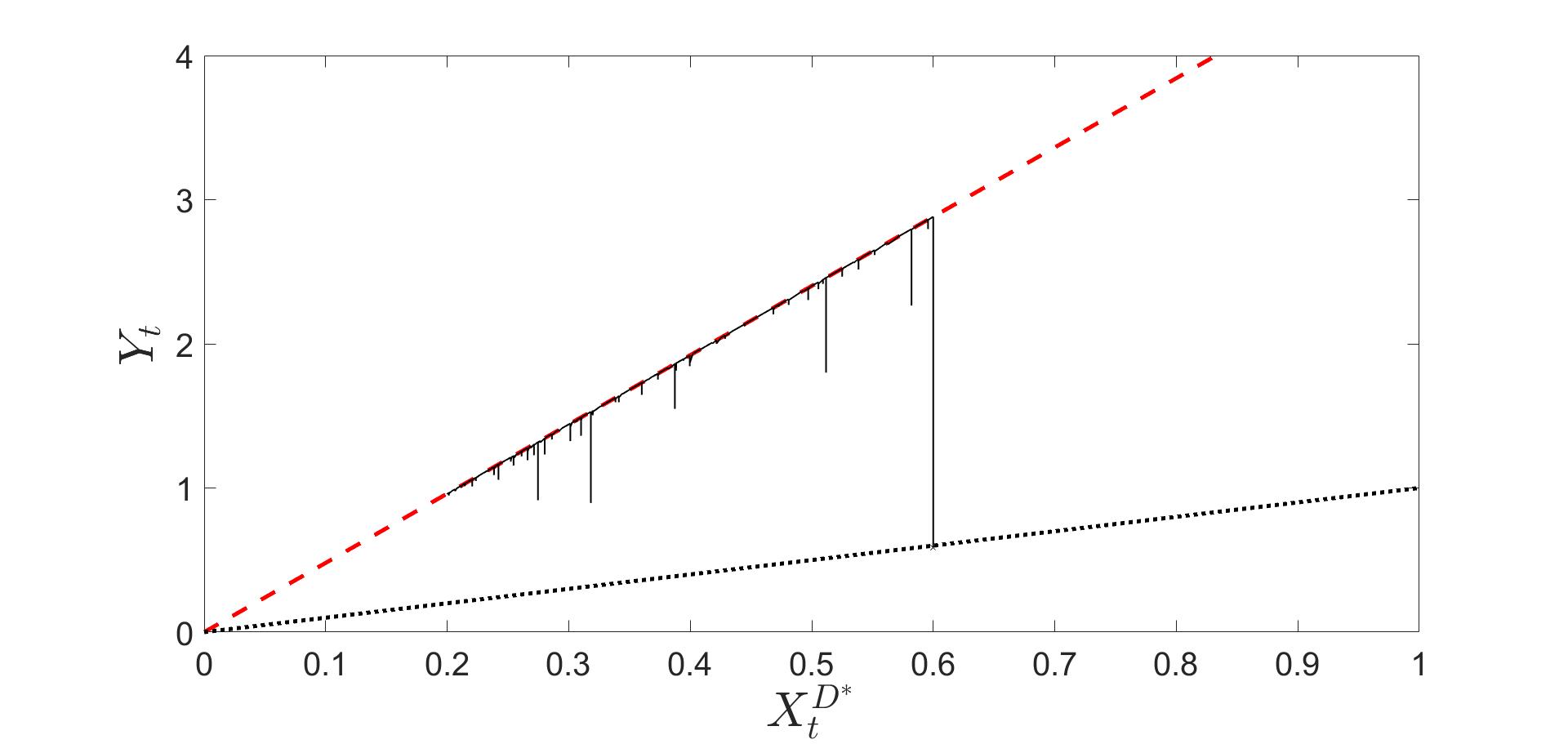}
\caption{One path of the process $(X_t^{D^*},Y_t)$ plotted until absorption in the diagonal (dotted line). Parameters are as in Figure \ref{fig2} and $(X^{D^\ast }_0, Y_0)=(0.2,0.2C)$.}
\label{path}
\end{figure}

It follows that the conditions of Theorem~\ref{thm:mainthm} are fulfilled, so an optimal
dividend boundary is given by the straight line $y=b^*(x)=Cx$. More specifically, 
the dividend strategy 
\[D^*_t=(\frac{1}{C}\sup_{0\leq s\leq t} Y_s-x)^+\]
is optimal in \eqref{eq:dividendvalue}. For an illustration of the optimally controlled path 
$(X^{D^*},Y)$, see Figure~\ref{path}.

\begin{remark}\label{rem:quality}
The structure of the optimal strategy we found here is different from the classical example with arithmetic Brownian motion studied in, e.g., \cite{AT,JPS,RS}. In that case dividends are paid optimally when the distance between the {\em pre}-dividend 
{equity capital}, $Y_t=y+\mu t+\sigma W_t$, and the total amount of dividends already paid out, $D_t$, is equal to a fixed constant $a^*$ (the optimal boundary), i.e., when $Y_t-D_t=a^*$. So, in that case, the optimal {\em distance} between $Y$ and $D$ is constant throughout the optimisation. In our problem formulation instead, the decision to pay dividends is determined based on a constant {\em ratio} between the process $Y$ and the process $D$. That is, dividends are paid out when $Y_t/X_t=C$. This scaling may be only {\em partially} read out of the logarithmic transformation that links geometric and arithmetic Brownian motion. Indeed, for the state variables $\tilde Y_t=\ln Y_t$ and $\tilde X_t=\ln X_t$ we  retrieve the optimality condition
\[
\tilde Y_t-\tilde X_t=a^*:=\ln C,
\]
and the absorption condition $Y\le X\iff \tilde{Y}\le\tilde{X}$. However, the logarithmic transformation of the expected payoff leads to
\[
\E\Big[\int_{[0,\gamma]}e^{-rt} \ud X_t\Big]=\E\Big[\int_{[0,\gamma]}e^{-rt} \ud e^{\tilde X_t}\Big]=\E\Big[\int_{[0,\gamma]}e^{-rt+\tilde X_t} \ud {\tilde X_t}\Big],
\]
where we notice that $\ud D=\ud X$ in the first expression and, for simplicity, we take $X$ with continuous trajectories in the change of variable formula. Therefore, after the logarithmic transformation the objective function is not the usual one from the classical dividend problem with arithmetic Brownian motion.
\end{remark}

\section*{Appendix}
\subsection*{Proof of Lemma \ref{lem:SK}}
Since $b\in\B$ then $b^{-1}$ is strictly increasing and continuous. Then the process $D^b$ is by definition continuous for $t>0$ and it has a single jump at time zero if $y>b(x)$. Moreover it is $(\F_t)$-adapted and non-decreasing and by continuity of paths $Y_{\gamma^b}=X^b_{\gamma^b}$ on $\{\gamma^b<\infty\}$. Therefore $D^b\in \A$. 

In order to prove that $(X^b,Y)$ solves the Skorokhod reflection problem we start by observing that, by construction, 
\[
X^{b}_t=x+D^b_t\ge b^{-1}(Y_t)\implies Y_t\le b(X^b_t),\quad\text{for all $t\ge 0$}.
\]
Fix $\omega\in\Omega$ and consider $s\ge 0$ such that $X^b_{s}(\omega)>b^{-1}\big(Y_s(\omega)\big)$. Then by definition of $X^b$ we have  \[
D^b_s(\omega)=\sup_{0\le  u\le s}\big(b^{-1}\big(Y_u(\omega)\big)-x\big)^+>b^{-1}\big(Y_s(\omega)\big)-x.
\]
Therefore, by continuity of $t\mapsto b^{-1}(Y_t(\omega))$
there exists $\eps_\omega>0$ such that $D^b_{s}(\omega)=D^b_{s'}(\omega)$ for all $s'\in [s,s+\eps_\omega]$, which implies $\ud D^b_s(\omega)=0$ as needed.\hfill$\square$

\subsection*{Proof of Lemma \ref{le:existencereflectedSDE}}
We first prove uniqueness. Recall that we have locally Lipschitz coefficients $(\mu,\sigma)$ with linear growth. In the notation of Bass \cite[Sec.\ 12]{Bass} we have $D=\{(x,y):y\ge x\}$, $\nu(x)=(-\frac{1}{\sqrt{2}},\frac{1}{\sqrt{2}})$ and $v(x)={\bf v}=(\tfrac{1}{2},1)$. Then, \cite[Thm.\ 12.4]{Bass} (see also the remark after the theorem) yields uniqueness and the strong Markov property of $(\hat X_{t\wedge\tau_n},\hat Y_{t\wedge\tau_n})_{t\ge 0}$ for $\tau_n:=\inf\{t\ge 0:\hat Y_t\ge n\}$ and any $n\in\mathbb N$. Linear growth of the coefficients implies $\tau_n\uparrow \infty$ a.s.\ as $n\to\infty$. Then we can obtain global uniqueness of a {strong Markov} solution $(\hat X_{t},\hat Y_{t})_{t\ge 0}$ by a standard limiting argument.

The proof of existence in Bass \cite{Bass} is given under an assumption of non-degeneracy of the reflecting diffusion which clearly fails in our case. For more general results Bass points to the classical paper by Lions and Sznitman \cite{LS}. Thanks to the special setting of our problem we can produce a simpler proof, which we include for completeness.

The main idea is to reduce the reflection problem to a classical problem for a reflecting Brownian motion. This can be achieved by a transformation via the scale function and a time-change. While this line of argument is canonical in the theory of one-dimensional diffusions, we believe the full proof might be difficult to find in the literature, hence we provide it here.

Let $S:[0,\infty)\to \mathbb R$ be the scale function associated to the coefficients $(\mu,\sigma)$ in the SDE for $\hat Y$ (see \eqref{eq:S}). Then, letting $\tilde Y_t:=S(\hat Y_t)$ and $\tilde X_t:= S(\hat X_t)$ and denoting $\tilde y:=S(y)$ and $\tilde x:= S(x)$, the dynamics of these two processes read
\begin{align*}
&\tilde Y_t=\tilde y+\int_0^t\tilde\sigma(\tilde Y_u)\ud W_u+\tilde A_t,\qquad \tilde X_t=\tilde x +\tfrac{1}{2}\tilde A_t,
\end{align*}
where $\tilde \sigma (y):=(S'\circ S^{-1})(y)(\sigma\circ S^{-1})(y)$ and 
\[
\tilde A_t:=\int_0^t(S'\circ S^{-1})(\tilde Y_u)\ud \hat A_t=\int_0^t(S'\circ S^{-1})(\tilde X_u)\ud \hat A_t,
\]
using that $\ud A_t$ is supported on $\{t:\tilde X_t=\tilde Y_t\}$.
Notice in particular that since $S$ is one-to-one, then \eqref{eq:reflectedSDE} admits an $(\F_t)_{t\ge 0}$-adapted solution if and only if the problem 
\begin{equation} \label{eq:reflectedSDE2}
\left\{\begin{array}{ll}
\tilde Y_t=\tilde y+\int_0^t\tilde\sigma(\tilde Y_u)\ud W_u+\tilde A_t,\qquad  \tilde X_t=\tilde x +\tfrac{1}{2}\tilde A_t, \\[+4pt]
\tilde Y_t \geq \tilde X_t\:\:\text{and}\:\: {\int_0^t 1_{\{\tilde Y_s>\tilde X_s \}} \ud\tilde A_s=0},\qquad \text{for all $t\ge 0$}\\
\end{array}
\right.
\end{equation}
admits one. Notice also that $\tilde X\in (S(x),\infty)$ by construction.

The next step removes the diffusion coefficient by a canonical time change. Indeed, the process 
\[
\tilde M_t:=\int_0^t \tilde \sigma (\tilde Y_u)\ud W_u
\]
is a continuous (local) martingale with quadratic variation 
\[
\langle \tilde M\rangle _t:=\int_0^t \tilde \sigma^2 (\tilde Y_u)\ud u.
\]
Since $\sigma(y)>0$ for $y>0$, then we have $\tilde \sigma(y)>0$ for $y>S(0)$ and the process $t\mapsto\langle \tilde M\rangle_t$ is strictly increasing.
Letting $\rho_t:=\inf\{s\ge 0:\langle \tilde M\rangle_s=t\}$ be the continuous inverse of $\langle\tilde M\rangle$ we have that $B_t:= \tilde M_{\rho_t}$ defines a continuous martingale with quadratic variation $\langle\tilde M\rangle_{\rho_t}=t$, hence $(B_t)_{t\ge 0}$ is a Brownian motion for the time-changed filtration $\check \F_t:=\F_{\rho_t}$ \cite[Thm.\ 3.3.16]{KSb}. Now, set $\check Y_t:=\tilde Y_{\rho_t}$, $\check X_t:=\tilde X_{\rho_t}$ and $\xi_t:=\tilde A_{\rho_t}$. Then 
\[
\xi_t:=\tilde A_{\rho_t}=\int_0^{\rho_t}1_{\{\tilde X_u=\tilde Y_u\}}\ud \tilde A_u
=\int_0^{\rho_t}1_{\{\check X_{\langle \tilde M\rangle _u}=\check Y_{\langle \tilde M\rangle _u}\}}\ud \xi_{\langle \tilde M\rangle _u}=\int_0^{t}1_{\{\check X_{s}=\check Y_{s}\}}\ud \xi_{s},
\] 
where the final equality holds by a simple change of variable, and \eqref{eq:reflectedSDE2} admits an $(\F_t)_{t\ge 0}$-adapted solution if and only if the problem below admits an $(\check \F_t)_{t\ge 0}$-adapted one:  
\begin{equation} \label{eq:reflectedSDE3}
\left\{\begin{array}{ll}
\check Y_t=\tilde y+B_t+\xi_t,\qquad  \check X_t=\tilde x +\tfrac{1}{2}\xi_t, \\[+4pt]
\check Y_t \geq \check X_t\:\:\text{and}\:\: {\int_0^t 1_{\{\check Y_s>\check X_s \}} \ud\xi_s=0},\qquad \text{for all $t\ge 0$}.\\
\end{array}
\right.
\end{equation}

Finally, letting $\check Z_t:= \check Y_t-\check X_t$ and $z:=\tilde y-\tilde x$ we have that \eqref{eq:reflectedSDE3} admits an $(\check \F_t)_{t\ge 0}$-adapted solution if and only if 
\begin{equation} \label{eq:reflectedSDE4}
\left\{\begin{array}{ll}
\check Z_t=z+B_t+\tfrac{1}{2}\xi_t, \\[+4pt]
\check Z_t \geq 0\:\:\text{and}\:\: {\int_0^t 1_{\{\check Z_s > 0 \}} \ud\xi_s=0},\qquad \text{for all $t\ge 0$}.\\
\end{array}
\right.
\end{equation}
The latter is just the classical Skorokhod reflection problem, whose solution is constructed explicitly by taking 
\[
\tfrac{1}{2}\xi_t= z\vee \sup_{0\le u\le t}(-B_u) - z.  
\] 
\hfill$\square$



\begin{thebibliography}{90}

\bibitem{AJ}
Alsmeyer, G.\ and Jaeger, M. A useful extension of It\^o's formula with applications to optimal stopping. {\em Acta Math.\ Sin.}, 
{\bf 21} (2005), no.\ 4, 779-786.

\bibitem{AS}
Alvarez, L.\ and Shepp, L.
Optimal harvesting of stochastically fluctuating populations. 
{\em J. Math. Biol.}, {\bf 37} (1998), no.\ 2, 155-177. 

\bibitem{AT}
Asmussen, S.\ and Taksar, M.
Controlled diffusion models for optimal dividend pay-out. 
{\em Insurance Math. Econom.}, {\bf 20} (1997), no.\ 1, 1-15. 

\bibitem{BKar}
Baldursson, F., Karatzas, I. Irreversible investment and industry equilibrium. {\em Finance Stoch.}, {\bf 1} (1996), 69-89.

\bibitem{Bass}
Bass, R.F. {\em Diffusions and elliptic operators}. Probability and its Applications. Springer-Verlag, New York, 1998. 

\bibitem{BC}
Bather, J.\ and Chernoff, H. Sequential decisions in the control of a spaceship, in Proceedings of the Fifth Berkeley Symposium on Mathematical Statistics and Probability (Berkeley, California, 1965/66), Vol. III: Physical Sciences, University of California Press, Berkeley, CA, 1967, pp. 181-207. 

\bibitem{BK}
Boetius, F. and Kohlmann, M. Connections between optimal stopping and singular stochastic control. {\em Stochastic Process. Appl.}, {\bf 77} (1998), no.\ 2, 253-281.

\bibitem{BS} 
Borodin, A.\ and Salminen, P. {\em Handbook of Brownian motion -- facts and formulae}. Second edition. Probability and its Applications. Birkh\"auser Verlag, Basel, 2002.

\bibitem{DSS}
Dubins, L., Shepp, L. and Shiryaev, A. 
Optimal stopping rules and maximal inequalities for Bessel processes. 
{\em Teor. Veroyatnost. i Primenen.} 38 (1993), no. 2, 288-330; translation in
{\em Theory Probab. Appl}. 38 (1993), no. 2, 226-261.


\bibitem{DeA20}
De Angelis, T. Optimal dividends with partial information and stopping of a degenerate reflecting diffusion. {\em Finance Stoch.}, {\bf 24} (2020), no.\ 1, 71-123.

\bibitem{DeAE17}
De Angelis, T.\ and Ekstr\"om, E. The dividend problem with a finite horizon. {\em Ann.\ Appl.\ Probab.}, {\bf 27} (2017), no.\ 6, 3525-3546.

\bibitem{DeAFM15}
De Angelis, T.\ Ferrari, G.\ and Moriarty, J. A nonconvex singular stochastic control problem and its related optimal stopping boundaries. {\em SIAM J.\ Control.\ Optim.}, {\bf 53} (2015), no.\ 3, 1199-1223.

\bibitem{DeF}
De Finetti, B. Su un'impostazione alternativa dell teoria colletiva del rischio.
{\em Transactions of the 15th International Congress of Actuaries} {\bf 2} (1957), 433-443. 

\bibitem{GT}
Guo, X.\ and Tomecek, P.
Connections between singular control and optimal switching. 
{\em SIAM J.\ Control Optim.}, {\bf 47} (2008), no.\ 1, 421-443.

\bibitem{JPS}
Jeanblanc-Picque, M.\ and Shiryaev, A.N., Optimization of the flow of dividends, 
{\em Russian Math. Surveys}, \textbf{50} (1995), 257-277.

\bibitem{KS}
Karatzas, I.\ and Shreve, S. Connections between optimal stopping and singular stochastic control. I. Monotone follower problems. {\em SIAM J.\ Control Optim.}, {\bf 22} (1984), no. 6, 856-877.

\bibitem{KSb}
Karatzas, I.\ and Shreve, S. {\em Brownian motion and stochastic calculus}. Springer-Verlag, New York, 1988.

\bibitem{LS}
Lions, P.L.\ and Sznitman, A.S. Stochastic differential equations with reflecting boundary conditions. {\em Comm.\ Pure Appl.\ Math.}, {\bf 37} (1984), 511-537.


\bibitem{MZ}
Merhi, A.\ and Zervos, M. A model for reversible investment capacity expansion. {\em SIAM J.\ Control Optim.}, {\bf 46} (2007), no.\ 3, 839-876.

\bibitem{P98}
Peskir, G. Optimal stopping of the maximum process: The maximality principle. {\em Ann.\ Probab.}, {\bf 26} (1998), no.\ 4, 1614-1640.

\bibitem{PS}
Peskir, G.\ and Shiryaev, A. {\em Optimal stopping and free-boundary problems}. Lectures in Mathematics ETH Z\"urich. Birkh\"auser Verlag, Basel, 2006.

\bibitem{RS}
Radner, R.\ and Shepp, L. Risk vs. profit potential: A model for corporate strategy. {\em J.\ Econom.\ Dynam.\ Control} {\bf 20} (1996), no.\ 8, 1373-1393.

\bibitem{SLG}
Shreve, S., Lehoczky, J.\ and Gaver, D.
Optimal consumption for general diffusions with absorbing and reflecting barriers.
{\em SIAM J. Control Optim.} {\bf 22} (1984), no. 1, 55-75. 

\end{thebibliography}
\end{document}